\DeclareFontFamily{OT1}{pzc}{}
  \DeclareFontShape{OT1}{pzc}{m}{it}{<-> s * [1.200] pzcmi7t}{}
  \DeclareMathAlphabet{\mathpzc}{OT1}{pzc}{m}{it}
\newcommand{\N}{\mathbb{N}}
\newcommand{\Z}{\mathbb{Z}}
\newcommand{\Q}{\mathbb{Q}}
\newcommand{\C}{\mathbb{C}}
\newcommand{\F}{\mathbb{F}}
\renewcommand{\P}{\mathbb{P}}
\renewcommand{\S}{\mathcal{S}}
\newcommand{\D}{\mathcal{D}}
\DeclareFontFamily{U}{wncy}{}
    \DeclareFontShape{U}{wncy}{m}{n}{<->wncyr10}{}
    \DeclareSymbolFont{mcy}{U}{wncy}{m}{n}
    \DeclareMathSymbol{\Sha}{\mathord}{mcy}{"58} 
\newcommand{\LIN}{\ensuremath{\operatorname{LIN}}}
\newcommand{\stackspace}{3}
\newcommand{\stack}[2][1cm]{\;\tikz[baseline, yshift=.65ex]%
    {\foreach \k [evaluate=\k as \r using (.5*#2+.5-\k)*\stackspace] in {1,...,#2}{%
    \ifodd\k{\draw[<-](0,\r pt)--(#1,\r pt);}%
    \else{\draw[->](0,\r pt)--(#1,\r pt);}\fi
    }}\;}
\newcommand{\Spec}{\ensuremath{\operatorname{Spec}}}
\newcommand{\Aut}{\ensuremath{\operatorname{Aut}}}
\newcommand{\orb}{\mathcal{O}}
\newcommand{\mor}{\ensuremath{\operatorname{mor}}}
\newcommand*\cat[1]{{\tt #1}}
\newcommand{\Hom}{\ensuremath{\operatorname{Hom}}}
\newcommand{\Fun}{\ensuremath{\operatorname{Fun}}}
\newcommand{\Frob}{\ensuremath{\operatorname{Frob}}}
\newcommand{\A}{\mathbb{A}}
\newcommand{\eff}{\ensuremath{\operatorname{eff}}}
\renewcommand{\epsilon}{\varepsilon}
\newcommand{\re}{\text{Re}}
\newcommand{\legen}[2]{\left (\frac{#1}{#2}\right )}
\newtheorem{thm}{Theorem}[section]
\newtheorem{prop}[thm]{Proposition}
\newtheorem{lem}[thm]{Lemma}
\newtheorem{defn}[thm]{Definition}
\newtheorem{cor}[thm]{Corollary}
\newtheorem{rem}[thm]{Remark}
  \let\oldrem\rem
  \renewcommand{\rem}{\oldrem\normalfont}
\newtheorem{ex}[thm]{Example}
  \let\oldex\ex
  \renewcommand{\ex}{\oldex\normalfont}
\begin{document}




\title{A Primer on Zeta Functions and Decomposition Spaces}
\author{Andrew Kobin\footnote{Emory University, Atlanta, USA, \; {\it email:} \url{ajkobin@emory.edu} \; The author is partially supported by the American Mathematical Society and the Simons Foundation.}}

\setcounter{footnote}{1}
\maketitle


\rhead{\thepage}
\cfoot{}

\begin{abstract}
Many examples of zeta functions in number theory, combinatorics and algebraic geometry are special cases of a construction in homotopy theory known as a decomposition space. This article aims to introduce readers to the relevant concepts in homotopy theory and lays some foundations for future applications of decomposition spaces in the theory of zeta and $L$-functions. 
\end{abstract}


\section{Introduction}

\vspace{0.1in}
This expository article is aimed at introducing number theorists to the theory of decomposition spaces in homotopy theory. Briefly, a decomposition space is a certain simplicial space that admits an abstract notion of an incidence algebra and, in particular, an abstract zeta function. To motivate the connections to number theory, we show that most classical notions of zeta functions in number theory and algebraic geometry are special cases of the construction using decomposition spaces. These examples suggest a wider application of decomposition spaces in number theory and algebraic geometry, which we intend to explore in future work. 

The paper is organized as follows. In Section~\ref{sec:zetafunctions}, we situate the Riemann zeta function, the Dedekind zeta function of a number field and the Hasse--Weil zeta function of a variety over a finite field in an algebra of arithmetic functions appropriate to each setting. This allows us to discuss some formal algebraic properties of these zeta functions in terms of a common product structure. We also discuss $L$-functions from this perspective in Section~\ref{sec:Lfun}. The heart of the paper is in Section~\ref{sec:incidencecoalgs}, where we describe the formalism of {\it decomposition spaces}, as developed by G\'{a}lvez-Carillo, Kock and Tonks \cite{gkt1,gkt2,gkt3}. This includes a more abstract discussion of the zeta functions in Section~\ref{sec:zetafunctions}. Finally, in Section~\ref{sec:future} we outline several future applications of the theory to important problems in number theory and algebraic geometry. These applications will likely require the full theory of decomposition spaces, not just the theory attached to posets. 

Some of the ideas for this article came from a reading course on $2$-Segal spaces organized by Julie Bergner at the University of Virginia in spring 2020, which resulted in the survey \cite{abd}. The author would like to thank the participants of this course, and in particular Bergner, Matt Feller and Bogdan Krstic, for enlightening discussions on these and related topics. The present work also owes much to a similar article by Joachim Kock \cite{koc}, which describes Riemann's zeta function from the perspective taken here. Kock notes that the techniques for posets go back at least to Stanley \cite{sta}. Finally, the author thanks Karen Acquista, Jon Aycock, Changho Han and Alicia Lamarche for comments on an early draft, and the referee for helpful suggestions on the final structure of the article.


\section{Classical Zeta Functions}
\label{sec:zetafunctions}

We begin our story with one of the most famous objects in mathematics: the Riemann zeta function $\zeta_{\Q}(s)$. In the spirit of the homotopy-theoretic angle of this article, we aim to highlight certain aspects of $\zeta_{\Q}(s)$ which are ripe for generalization. 

\subsection{The Riemann Zeta Function}
\label{sec:riemannzeta}

In his landmark 1859 manuscript ``On the number of primes less than a given magnitude'', Riemann laid the foundation for modern analytic number theory with his extensive study of the function 
$$
\zeta(s) = \sum_{n = 1}^{\infty} \frac{1}{n^{s}}. 
$$
(To reserve the symbol $\zeta$ for later use, we will denote Riemann's zeta function by $\zeta_{\Q}(s)$.) The series $\zeta_{\Q}(s)$ converges for all \emph{complex numbers} $s$ with $\re(s) > 1$ and it conceals deep information about the nature of prime numbers within its analytic structure, e.g.~through its functional equation and nontrivial zeroes. One of the fundamental algebraic properties of $\zeta_{\Q}(s)$ is the product formula
$$
\zeta_{\Q}(s) = \prod_{p\text{ prime}} \left (1 - \frac{1}{p^{s}}\right )^{-1}. 
$$
Classically, the product formula is a consequence of unique factorization in $\Z$, which we reinterpret in Section~\ref{sec:posets} in the language of incidence algebras. 

Consider any function $f : \N\rightarrow\C$, which we will refer to as an {\it arithmetic function}\footnote{``Arithmetic function'' often refers to weakly multiplicative functions only. However, we do not restrict to such functions.} To $f$, we associate a Dirichlet series 
$$
F(s) = \sum_{n = 1}^{\infty} \frac{f(n)}{n^{s}}, \quad s\in\C. 
$$
For example, the Riemann zeta function is the Dirichlet series for the function $\zeta : \N\rightarrow\C$ defined by $\zeta(n) = 1$ for all $n$. We will see that other zeta functions can be constructed in a similar way. 

\begin{ex}
\label{ex:classicalMobfunction}
The {\it M\"{o}bius function} $\mu : \N\rightarrow\C$ is defined by 
$$
\mu(n) = \begin{cases}
  0, & p^{2}\mid n \text{ for some prime } p\\
  (-1)^{r}, & n = p_{1}\cdots p_{r} \text{ for distinct primes } p_{i}. 
\end{cases}
$$
We will write its corresponding Dirichlet series by 
$$
\mu_{\Q}(s) = \sum_{n = 1}^{\infty} \frac{\mu(n)}{n^{s}}. 
$$
The M\"{o}bius function can be characterized by the property that $\zeta_{\Q}(s) = \mu_{\Q}(s)^{-1}$. That is, 
$$
\sum_{n = 1}^{\infty} \frac{1}{n^{s}} = \left (\sum_{n = 1}^{\infty} \frac{\mu(n)}{n^{s}}\right )^{-1}. 
$$
There are classical proofs of this relation, but we will deduce it in a moment from the abstract properties of Dirichlet convolution. From now on, we ignore any considerations of convergence in our Dirichlet series. 
\end{ex}

For two arithmetic functions $f,g : \N\rightarrow\C$, their Dirichlet convolution is the function $f*g : \N\rightarrow\C$ defined by 
$$
(f*g)(n) = \sum_{ij = n} f(i)g(j)
$$
for all $n$. An equivalent definition is that Dirichlet convolution is the unique product on arithmetic functions corresponding to multiplication of their Dirichlet series: 

\begin{lem}
\label{lem:dirichletconv}
If $f,g : \N\rightarrow\C$ have Dirichlet series $F(s) = \sum_{n = 1}^{\infty} \frac{f(n)}{n^{s}}$ and $G(s) = \sum_{n = 1}^{\infty} \frac{g(n)}{n^{s}}$, respectively, then 
$$
F(s)G(s) = \sum_{n = 1}^{\infty} \frac{(f*g)(n)}{n^{s}}. 
$$
\end{lem}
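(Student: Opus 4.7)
The plan is to multiply the two Dirichlet series out formally and regroup the resulting double sum by the common denominator. Since the paper explicitly instructs us to ignore convergence issues from here on, no analytic justification for rearrangement is required; the identity is to be read as an equality of formal Dirichlet series.

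Concretely, I would first write
$$
F(s)G(s) = \left(\sum_{i=1}^{\infty}\frac{f(i)}{i^{s}}\right)\left(\sum_{j=1}^{\infty}\frac{g(j)}{j^{s}}\right) = \sum_{i=1}^{\infty}\sum_{j=1}^{\infty}\frac{f(i)g(j)}{(ij)^{s}},
$$
using that $i^{s}j^{s}=(ij)^{s}$. Next, I would reindex the double sum by the product $n=ij$: for each positive integer $n$, the pairs $(i,j)\in\N\times\N$ with $ij=n$ contribute the terms whose denominator is $n^{s}$. Pulling out $1/n^{s}$ gives
$$
F(s)G(s) = \sum_{n=1}^{\infty}\frac{1}{n^{s}}\sum_{ij=n}f(i)g(j) = \sum_{n=1}^{\infty}\frac{(f*g)(n)}{n^{s}},
$$
where the final equality is the definition of Dirichlet convolution.

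There is essentially no obstacle here: the only subtlety is the reindexing step, which is legitimate at the level of formal Dirichlet series because for each fixed $n$ the inner sum $\sum_{ij=n}f(i)g(j)$ is finite (the divisor sum over pairs $(i,j)$ with $ij=n$). If one wanted to be careful about convergence in some half-plane $\operatorname{Re}(s)>\sigma_{0}$, one would invoke absolute convergence of both series there to justify the interchange of summation by Fubini, but as noted this is explicitly set aside. Thus the lemma reduces to bookkeeping, and the proof amounts to the two displays above.
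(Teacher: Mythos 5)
Your proof is correct and is exactly the standard verification the paper has in mind when it declares the lemma ``straightforward to check'' (the paper omits the argument entirely). Expanding the product, regrouping by $n = ij$, and noting the inner divisor sum is finite is all that is needed, and your handling of convergence matches the paper's stated convention of ignoring it.
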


The following properties of Dirichlet convolution are standard. 

\begin{prop}[{\cite[Sec.~3.6 - 3.7]{sta}}]
\label{prop:classicalMob}
Let $A = \{f : \N\rightarrow\C\}$ be the complex vector space of arithmetic functions. Then 
\begin{enumerate}[\quad (1)]
  \item $A$ is a commutative $\C$-algebra via Dirichlet convolution. 
  \item The function 
\begin{align*}
  \delta : \N &\longrightarrow \C\\
    n &\longmapsto \begin{cases} 1, & n = 1\\ 0, & n > 1\end{cases}
\end{align*}
  is a unit for convolution, making $A$ a unital $\C$-algebra. 
  \item $\mu*\zeta = \delta = \zeta*\mu$. 
\end{enumerate}
\end{prop}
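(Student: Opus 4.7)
The plan is to verify each of the three claims directly from the definition of Dirichlet convolution, treating (1) and (2) as routine bookkeeping and (3) as the only piece requiring genuine combinatorial input.

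For (1), I would first observe that $f*g$ is manifestly bilinear in $(f,g)$, so convolution defines a $\C$-linear product on $A$. Commutativity follows by reindexing the sum $(f*g)(n) = \sum_{ij=n} f(i)g(j)$ via the involution $(i,j)\mapsto (j,i)$ on factorizations of $n$. For associativity, the cleanest argument is to expand both $((f*g)*h)(n)$ and $(f*(g*h))(n)$; each unfolds to the symmetric triple sum
\[
\sum_{ijk = n} f(i)\,g(j)\,h(k),
\]
so they agree on the nose. Alternatively, one can invoke Lemma~\ref{lem:dirichletconv} to transport commutativity and associativity from multiplication of formal Dirichlet series, where they are automatic.

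For (2), the verification is a one-line computation: since $\delta(i)=0$ unless $i=1$, the sum $(\delta*f)(n) = \sum_{ij=n}\delta(i)f(j)$ collapses to $\delta(1)f(n) = f(n)$, and commutativity from (1) gives $f*\delta = f$ as well.

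For (3), I need to show $(\mu*\zeta)(n) = \sum_{d\mid n}\mu(d) = \delta(n)$ for every $n$; the other equality then follows from commutativity. The case $n=1$ is immediate. For $n>1$, the key input is that $\mu$ vanishes on non-squarefree integers, so only squarefree divisors of $n$ contribute. If $p_{1},\dots,p_{r}$ are the distinct prime divisors of $n$, the squarefree divisors correspond bijectively to subsets $S\subseteq\{1,\dots,r\}$, with $\mu$-value $(-1)^{|S|}$, and the sum reduces to
\[
\sum_{d\mid n}\mu(d) \;=\; \sum_{S\subseteq\{1,\dots,r\}}(-1)^{|S|} \;=\; (1-1)^{r} \;=\; 0 \;=\; \delta(n).
\]
The only substantive obstacle is this final binomial cancellation, which is the combinatorial heart of classical M\"obius inversion; everything else in the proposition is essentially formal manipulation of the convolution formula.
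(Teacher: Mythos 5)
Your proof is correct, but it takes a different route from the paper. You verify everything directly from the definition of Dirichlet convolution: bilinearity and the reindexings $(i,j)\mapsto(j,i)$ and $ijk=n$ for commutativity and associativity, the collapse of the sum against $\delta$ for unitality, and the binomial cancellation $\sum_{S\subseteq\{1,\dots,r\}}(-1)^{|S|}=(1-1)^{r}=0$ for the identity $\mu*\zeta=\delta$ — this last step correctly isolated as the only nontrivial input. The paper deliberately does \emph{not} argue this way: it defers the proof to Section~\ref{sec:posets}, where $A$ (more precisely, the functions constant on isomorphism classes of intervals) is identified with the reduced incidence algebra $\widetilde{I}(\N,\mid)$ of the locally finite divisibility poset, so that all three parts — and in particular the existence and formula for $\mu=\zeta^{-1}$ — fall out of the general M\"obius inversion result for posets, Proposition~\ref{prop:posetMob}. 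Your argument buys self-containedness and is the classical proof one finds in Stanley; the paper's buys uniformity, since the identical mechanism then delivers Propositions~\ref{prop:DedekindMob} and~\ref{prop:Mobinv0cycles} (and ultimately the decomposition-space picture) without redoing the combinatorics in each setting. One small caution on your alternative associativity argument via Lemma~\ref{lem:dirichletconv}: transporting associativity from formal Dirichlet series is legitimate only because the assignment $f\mapsto F(s)$ is injective on coefficients, a point worth stating if you go that way; the direct triple-sum expansion you give first is cleaner.
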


In particular, (3) implies the formula on Dirichlet series: $\zeta_{\Q}(s) = \mu_{\Q}(s)^{-1}$. 

\begin{cor}[M\"{o}bius Inversion]
\label{cor:classicalMob}
For any $f,g\in A$, if $f = g*\zeta$ then $g = f*\mu$. That is, 
$$
\text{if}\quad f(n) = \sum_{d\mid n} g(d) \quad\text{then}\quad g(n) = \sum_{d\mid n} f(d)\mu\left (\frac{n}{d}\right ). 
$$
\end{cor}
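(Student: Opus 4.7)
The plan is to exploit the algebra structure on $A$ that was established in Proposition~\ref{prop:classicalMob}. Since $A$ is a commutative $\C$-algebra under Dirichlet convolution and $\mu * \zeta = \delta$, the functions $\mu$ and $\zeta$ are mutually inverse units in $A$. So the identity $f = g * \zeta$ can be solved for $g$ by convolving on the right with $\mu$.

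Concretely, I would first assume $f = g * \zeta$ and then compute
\[
f * \mu = (g * \zeta) * \mu = g * (\zeta * \mu) = g * \delta = g,
\]
where the second equality uses associativity of $*$ (part of the algebra structure in Proposition~\ref{prop:classicalMob}), the third uses $\zeta * \mu = \delta$, and the last uses that $\delta$ is the unit. This gives $g = f * \mu$ at the level of arithmetic functions.

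It then remains to unwind this equality into the summation form stated in the corollary. From the definition of Dirichlet convolution,
\[
(g * \zeta)(n) = \sum_{ij = n} g(i)\zeta(j) = \sum_{d \mid n} g(d),
\]
since $\zeta \equiv 1$; so the hypothesis $f = g * \zeta$ is exactly $f(n) = \sum_{d \mid n} g(d)$. Similarly,
\[
(f * \mu)(n) = \sum_{ij = n} f(i)\mu(j) = \sum_{d \mid n} f(d)\mu\!\left(\tfrac{n}{d}\right),
\]
so the conclusion $g = f * \mu$ unpacks to $g(n) = \sum_{d \mid n} f(d)\mu(n/d)$, as required.

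There is essentially no obstacle here once Proposition~\ref{prop:classicalMob} is in hand: the entire content of Möbius inversion is that $\zeta$ has a convolution inverse given by $\mu$. The only mild care needed is the translation between the abstract algebraic statement and the concrete summation identity, which is just a matter of writing out the definition of $*$.
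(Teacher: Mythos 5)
Your proof is correct and is exactly the argument the paper intends: the corollary is left as an immediate consequence of Proposition~\ref{prop:classicalMob}, using associativity, the unit $\delta$, and $\zeta*\mu=\delta$ to compute $f*\mu = g*(\zeta*\mu) = g$, then unwinding the convolutions. (The paper later re-derives this from the general poset statement, Proposition~\ref{prop:posetMob}, but that is the same computation in the reduced incidence algebra of $(\N,\mid)$.)
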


\subsection{Dedekind Zeta Functions}
\label{sec:dedekindzeta}

The classical story of arithmetic functions on $\N$ and Dirichlet series generalizes in several directions. First, let $K/\Q$ be a number field with ring of integers $\orb_{K}$. Then $\orb_{K}$ has unique factorization of ideals. Let $I_{K}^{+}$ denote the semigroup of (nonzero) ideals $\frak{a}\subset\orb_{K}$ (the $+$ notation is to distinguish this from the group of fractional $\orb_{K}$-ideals $I_{K}$). The relation $\frak{a}\mid\frak{b}$ if and only if $\frak{b}\subseteq\frak{a}$ endows $I_{K}^{+}$ with the structure of a poset. 

Let $N = N_{K/\Q} : I_{K}^{+}\rightarrow\N,N(\frak{a}) = [\orb_{K} : \frak{a}]$, be the ideal norm. Then the Dedekind zeta function of $K/\Q$ can be defined by 
$$
\zeta_{K}(s) = \sum_{\frak{a}\in I_{K}^{+}} \frac{1}{N(\frak{a})^{s}} \quad\text{for } \re(s) > 1. 
$$
As with $\zeta_{\Q}(s)$, $\zeta_{K}(s)$ has a product formula: 
$$
\zeta_{K}(s) = \prod_{\frak{p}\in\Spec\orb_{K}} \left (1 - \frac{1}{N(\frak{p})^{s}}\right )^{-1}
$$
This follows from the unique factorization of prime ideals in $\orb_{K}$, which we will reinterpret in Section~\ref{sec:posets}. 

Mimicking our description of arithmetic functions in Section~\ref{sec:zetafunctions}, let us call any function $f : I_{K}^{+}\rightarrow\C$ an {\it arithmetic function} over $K$. To such a function we associate a Dirichlet series 
$$
F(s) = \sum_{\frak{a}\in I_{K}^{+}} \frac{f(\frak{a})}{N(\frak{a})^{s}}, \quad s\in\C. 
$$
Then the Dedekind zeta function is the Dirichlet series for the arithmetic function $\zeta : I_{K}^{+}\rightarrow\C$ defined by $\zeta(\frak{a}) = 1$ for all $\frak{a}\in I_{K}^{+}$. 

\begin{ex}
\label{ex:DedekindMob}
The M\"{o}bius function $\mu : I_{K}^{+}\rightarrow\C$ is the function 
$$
\mu(\frak{a}) = \begin{cases}
  0, & \frak{p}^{2}\mid\frak{a} \text{ for some prime ideal } \frak{p}\\
  (-1)^{r}, & \frak{a} = \frak{p}_{1}\cdots\frak{p}_{r} \text{ for distinct prime ideals } \frak{p}_{i}. 
\end{cases}
$$
We will write its corresponding Dirichlet series by 
$$
\mu_{K}(s) = \sum_{\frak{a}\in I_{K}^{+}} \frac{\mu(\frak{a})}{n^{s}}. 
$$
Then $\zeta_{K}(s) = \mu_{K}(s)^{-1}$, that is, 
$$
\sum_{\frak{a}\in I_{K}^{+}} \frac{1}{N(\frak{a})^{s}} = \left (\sum_{\frak{a}\in I_{K}^{+}} \frac{\mu(\frak{a})}{N(\frak{a})^{s}}\right )^{-1}. 
$$
\end{ex}

As we did for the Riemann zeta function, we can deduce this formula using a convolution product. For two arithmetic functions $f,g : I_{K}^{+}\rightarrow\C$, their convolution is the function $f*g : I_{K}^{+}\rightarrow\C$ defined by 
$$
(f*g)(\frak{a}) = \sum_{\frak{bc} = \frak{a}} f(\frak{b})g(\frak{c})
$$
for all $\frak{a}\subset\orb_{K}$. An equivalent characterization of $f*g$ is given by: 

\begin{lem}
\label{lem:Dedekinddirichletconv}
If $f,g : I_{K}^{+}\rightarrow\C$ have Dirichlet series $F(s) = \sum_{\frak{a}} \frac{f(\frak{a})}{N(\frak{a})^{s}}$ and $G(s) = \sum_{\frak{a}} \frac{g(\frak{a})}{N(\frak{a})^{s}}$, respectively, then 
$$
F(s)G(s) = \sum_{\frak{a}\in I_{K}^{+}} \frac{(f*g)(\frak{a})}{N(\frak{a})^{s}}. 
$$
\end{lem}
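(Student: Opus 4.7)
The plan is to mirror the proof of Lemma~\ref{lem:dirichletconv} almost verbatim, with the role of integer multiplication replaced by ideal multiplication and the role of the absolute value of integers replaced by the ideal norm $N : I_{K}^{+}\rightarrow\N$. Since the paper has agreed to treat these Dirichlet series as formal expressions, there is no convergence issue to address, and the argument reduces to a purely combinatorial reindexing.

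First I would expand the product $F(s)G(s)$ as a double sum indexed by pairs $(\frak{b},\frak{c})\in I_{K}^{+}\times I_{K}^{+}$:
$$
F(s)G(s) = \sum_{\frak{b},\frak{c}\in I_{K}^{+}} \frac{f(\frak{b})\,g(\frak{c})}{N(\frak{b})^{s}\,N(\frak{c})^{s}}.
$$
The crucial input is the multiplicativity of the ideal norm, $N(\frak{b}\frak{c}) = N(\frak{b})\,N(\frak{c})$, which follows from the standard fact that for coprime ideals $N$ is multiplicative and for prime powers $N(\frak{p}^{k}) = N(\frak{p})^{k}$, together with unique factorization into primes in the Dedekind domain $\orb_{K}$. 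This lets me rewrite each denominator as $N(\frak{b}\frak{c})^{s}$.

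Next I would regroup the sum by the product $\frak{a} = \frak{b}\frak{c}$. Since $\orb_{K}$ is a Dedekind domain, each ideal $\frak{a}\in I_{K}^{+}$ has only finitely many factorizations $\frak{a} = \frak{b}\frak{c}$ with $\frak{b},\frak{c}\in I_{K}^{+}$ (these correspond bijectively to divisors of $\frak{a}$), so the inner sum is well-defined as a formal expression. Then
$$
F(s)G(s) = \sum_{\frak{a}\in I_{K}^{+}} \frac{1}{N(\frak{a})^{s}}\sum_{\frak{b}\frak{c}=\frak{a}} f(\frak{b})\,g(\frak{c}) = \sum_{\frak{a}\in I_{K}^{+}} \frac{(f*g)(\frak{a})}{N(\frak{a})^{s}},
$$
by the definition of Dirichlet convolution on $I_{K}^{+}$.

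There is no real obstacle; the only nontrivial ingredient is the multiplicativity of the ideal norm, which is classical. If one did want to address convergence, the step to verify would be absolute convergence of the double series for $\re(s)$ sufficiently large, which would justify the rearrangement—this is the standard bound $|F(s)G(s)|\leq\sum|f(\frak{b})g(\frak{c})|N(\frak{b}\frak{c})^{-\re(s)}$ combined with the fact that $\zeta_{K}(s)$ converges for $\re(s)>1$—but in the formal setting adopted in the paper, this is unnecessary.
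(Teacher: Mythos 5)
Your proof is correct and is exactly the argument the paper has in mind: the paper's own proof is simply ``Straightforward once again,'' and your expansion---multiply the series, use multiplicativity of the ideal norm $N(\frak{b}\frak{c}) = N(\frak{b})N(\frak{c})$, and regroup by $\frak{a} = \frak{b}\frak{c}$ using the fact that each ideal has finitely many factorizations---supplies precisely the omitted details.
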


Then the analogue of Proposition~\ref{prop:classicalMob} is: 

\begin{prop}
\label{prop:DedekindMob}
Let $A_{K} = \{f : I_{K}^{+}\rightarrow\C\}$ be the complex vector space of functions on the semigroup $I_{K}^{+}$. Then 
\begin{enumerate}[\quad (1)]
  \item $A_{K}$ is a commutative $\C$-algebra via Dirichlet convolution. 
  \item The function 
\begin{align*}
  \delta : I_{K}^{+} &\longrightarrow \C\\
    \frak{a} &\longmapsto \begin{cases} 1, & \frak{a} = (1)\\ 0, & \frak{a}\not = (1)\end{cases}
\end{align*}
  is a unit for convolution, making $A_{K}$ a unital $\C$-algebra. 
  \item In $A_{K}$, we have $\mu*\zeta = \delta = \zeta*\mu$. 
\end{enumerate}
\end{prop}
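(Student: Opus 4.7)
The plan is to translate the classical argument of Proposition~\ref{prop:classicalMob} essentially verbatim, with unique factorization of integers replaced by unique factorization of ideals in the Dedekind domain $\orb_K$. Parts (1) and (2) are purely formal properties of the convolution and should not present any difficulty: commutativity is the reindexing $(\frak{b},\frak{c}) \mapsto (\frak{c},\frak{b})$ of the defining sum, bilinearity is immediate, and the $\delta$-identity collapses $(\delta * f)(\frak{a}) = \sum_{\frak{bc}=\frak{a}} \delta(\frak{b}) f(\frak{c})$ to the single term with $\frak{b} = (1)$. The only step requiring a moment's thought is associativity, where I would unwind both $(f*g)*h$ and $f*(g*h)$ evaluated at $\frak{a}$ to the same triple sum $\sum_{\frak{bcd}=\frak{a}} f(\frak{b}) g(\frak{c}) h(\frak{d})$ over ordered factorizations of $\frak{a}$ into three ideals. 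All sums are finite since each divisor of $\frak{a}$ uniquely determines its cofactor in $I_K^+$.

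The real content of the proposition is part (3), the identity $(\mu*\zeta)(\frak{a}) = \delta(\frak{a})$. Unfolding the convolution, I want to show $\sum_{\frak{b} \mid \frak{a}} \mu(\frak{b}) = \delta(\frak{a})$ for every $\frak{a} \in I_K^+$. The case $\frak{a} = (1)$ is immediate, since the unit ideal is its own only divisor. For $\frak{a} \neq (1)$, I would invoke unique factorization to write $\frak{a} = \frak{p}_1^{k_1}\cdots\frak{p}_r^{k_r}$, then observe that $\mu(\frak{b}) = 0$ unless $\frak{b}$ is squarefree, so the squarefree divisors of $\frak{a}$ correspond bijectively to subsets $S \subseteq \{1,\ldots,r\}$ via $S \mapsto \prod_{i \in S} \frak{p}_i$. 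The sum collapses to the binomial identity $\sum_{S} (-1)^{|S|} = (1-1)^r = 0$.

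The main obstacle, though a mild one, is precisely step (3): it is the one place where the Dedekind hypothesis genuinely enters, through the bijection between squarefree divisors of $\frak{a}$ and subsets of its prime support. Once $\mu*\zeta = \delta$ is established, commutativity from (1) gives $\zeta*\mu = \mu*\zeta$ for free, so the second equality in (3) requires no separate verification. Everything else is a bookkeeping exercise parallel to Proposition~\ref{prop:classicalMob}, and the resulting algebra $A_K$ is the natural analogue of the algebra of arithmetic functions on $\N$.
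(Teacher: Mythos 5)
Your argument is correct, but it is not the route the paper takes. The paper does not prove Proposition~\ref{prop:DedekindMob} directly: it defers the proof to Section~\ref{sec:posets}, where $A_{K}$ is identified with the reduced incidence algebra $\widetilde{I}(I_{K}^{+},\mid)$ of the locally finite poset of ideals under divisibility, so that all three parts become instances of the general M\"{o}bius inversion for posets (Proposition~\ref{prop:posetMob}); the explicit formula for $\mu$ is then recovered there from the decomposition $(I_{K}^{+},\mid)\cong\prod_{\frak{p}}(\N_{0},\leq)$ and the tensor factorization $\mu = \bigotimes_{\frak{p}}(\delta - \delta_{\frak{p}})$. Your proof is the direct classical computation: parts (1) and (2) by reindexing and collapsing sums, and part (3) by reducing $\sum_{\frak{b}\mid\frak{a}}\mu(\frak{b})$ to the binomial identity $(1-1)^{r}=0$ over the squarefree divisors of $\frak{a}$. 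What your approach buys is a self-contained, elementary verification that makes the role of unique factorization of ideals completely explicit; what the paper's approach buys is that the same general theorem simultaneously handles $(\N,\mid)$, $(I_{K}^{+},\mid)$ and $(Z_{0}^{\eff}(X),\leq)$, and automatically reconciles the recursively defined poset M\"{o}bius function with the explicit one of Example~\ref{ex:DedekindMob} --- which is precisely the unification the article is advertising. One small point in your write-up: the finiteness of the convolution sums follows not from the fact that a divisor determines its cofactor (which only identifies the index set with the set of divisors of $\frak{a}$) but from the fact that $\frak{a}=\frak{p}_{1}^{k_{1}}\cdots\frak{p}_{r}^{k_{r}}$ has only the finitely many divisors $\frak{p}_{1}^{j_{1}}\cdots\frak{p}_{r}^{j_{r}}$ with $0\leq j_{i}\leq k_{i}$; this is the same local finiteness the paper invokes for the poset $(I_{K}^{+},\mid)$.
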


In particular, (3) implies $\zeta_{K}(s) = \mu_{K}(s)^{-1}$. 

\begin{cor}[M\"{o}bius Inversion for Number Fields]
\label{cor:DedekindMob}
For any $f,g\in A_{K}$, if $f = g*\zeta$ then $g = f*\mu$. That is, 
$$
\text{if}\quad f(\frak{a}) = \sum_{\frak{d}\mid\frak{a}} g(\frak{d}) \quad\text{then}\quad g(\frak{a}) = \sum_{\frak{d}\mid\frak{a}} f(\frak{d})\mu(\frak{ad}^{-1})
$$
where $\frak{d}^{-1}$ denotes the inverse of $\frak{d}$ in the group of fractional ideals $I_{K}$. 
\end{cor}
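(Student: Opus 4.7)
The plan is to deduce the corollary as a purely formal consequence of Proposition \ref{prop:DedekindMob}, which equips $A_{K}$ with an associative, commutative product $*$ satisfying $\mu*\zeta = \zeta*\mu = \delta$. The entire Möbius inversion formula is then just the statement that $\zeta$ and $\mu$ are two-sided inverses under convolution, so multiplying the hypothesis by the appropriate inverse isolates $g$.

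Concretely, I would start with the hypothesis $f = g*\zeta$ and convolve both sides on the right by $\mu$. Using associativity from Proposition \ref{prop:DedekindMob}(1), the right-hand side becomes $(g*\zeta)*\mu = g*(\zeta*\mu)$, which by part (3) equals $g*\delta$, and by part (2) equals $g$. Hence $g = f*\mu$, which proves the first, operator-level form of the statement.

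It then remains to translate the identity $g = f*\mu$ into the explicit summation displayed in the corollary. By the definition of Dirichlet convolution on $I_{K}^{+}$, we have
$$
(f*\mu)(\frak{a}) = \sum_{\frak{b}\frak{c} = \frak{a}} f(\frak{b})\mu(\frak{c}).
$$
The map $\frak{d}\mapsto(\frak{d},\frak{a}\frak{d}^{-1})$ gives a bijection between the set of divisors $\frak{d}\mid\frak{a}$ and the set of ordered factorizations $\frak{b}\frak{c} = \frak{a}$ in $I_{K}^{+}$, using that $\frak{d}\mid\frak{a}$ means $\frak{a}\frak{d}^{-1}\in I_{K}^{+}$ inside the fractional ideal group $I_{K}$. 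Substituting $\frak{b} = \frak{d}$ and $\frak{c} = \frak{a}\frak{d}^{-1}$ converts the convolution sum into $\sum_{\frak{d}\mid\frak{a}} f(\frak{d})\mu(\frak{a}\frak{d}^{-1})$, matching the stated formula. The identical bijection also confirms that the hypothesis $f = g*\zeta$ unpacks correctly as $f(\frak{a}) = \sum_{\frak{d}\mid\frak{a}} g(\frak{d})$, since $\zeta$ is constantly $1$.

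There is essentially no obstacle here: the whole point of setting up the algebra $A_{K}$ in Proposition \ref{prop:DedekindMob} is to reduce Möbius inversion to a one-line algebraic identity. The only mildly delicate step is the use of the fractional ideal group $I_{K}$ to make sense of $\frak{a}\frak{d}^{-1}$ and to witness the bijection between divisors of $\frak{a}$ and ordered factorizations of $\frak{a}$, which rests on the Dedekind property of $\orb_{K}$ used at the start of Section \ref{ss:dedekindzeta}.
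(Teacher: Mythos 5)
Your argument is correct: convolving $f = g*\zeta$ on the right by $\mu$ and invoking associativity, $\zeta*\mu=\delta$, and unitality of $\delta$ from Proposition~\ref{prop:DedekindMob} immediately gives $g = f*\mu$, and your bijection $\frak{d}\mapsto(\frak{d},\frak{a}\frak{d}^{-1})$ between divisors of $\frak{a}$ and ordered factorizations $\frak{b}\frak{c}=\frak{a}$ correctly unpacks both the hypothesis and the conclusion into the displayed sums. The paper, however, does not prove the corollary at this point; it defers the proof to Section~\ref{sec:posets}, where $A_{K}$ is identified with the reduced incidence algebra $\widetilde{I}(I_{K}^{+},\mid)$ of the divisibility poset and the statement is obtained as an instance of Rota's formula (Proposition~\ref{prop:posetMob}(2)) for general locally finite posets. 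The two routes rest on the same underlying computation --- Rota's formula is itself proved by the associativity-plus-two-sided-inverse manipulation you carry out --- but they package it differently: your version is self-contained and uses only what is already on the page in Section~\ref{ss:dedekindzeta}, while the paper's version buys the uniform treatment of $(\N,\mid)$, $(I_{K}^{+},\mid)$, and $(Z_{0}^{\eff}(X),\leq)$ as specializations of a single poset-level theorem, which is the expository point of the article. Either way the corollary is established; your proof is a valid shortcut that does not depend on the later identification.
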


In Section~\ref{sec:posets}, we identify $A_{K}$ with the reduced incidence algebra of the poset $(I_{K}^{+},\mid)$, and the above results will be subsumed by M\"{o}bius inversion for posets. 

\subsection{Hasse--Weil Zeta Functions}
\label{sec:HWzeta}

Another type of zeta function arises from algebraic geometry over finite fields, famously developed by Serre, Weil and others. Let $X$ be a variety over a finite field $k = \F_{q}$. For each $n\geq 1$, let $\#X(\F_{q^{n}})$ denote the number of points of $X$ defined over the unique field extension $\F_{q^{n}}$ of $\F_{q}$ of degree $n$. The Hasse--Weil zeta function of $X/\F_{q}$ is the formal power series 
$$
Z(X,t) = \exp\left [\sum_{n = 1}^{\infty} \frac{\#X(\F_{q^{n}})}{n}t^{n}\right ]
$$
where $\exp(t) = 1 + \sum_{n = 1}^{\infty} \frac{1}{n!}t^{n}$. This function is the subject of the Weil Conjectures, all of which have been settled: 
\begin{thm}[Weil Conjectures]
Let $X$ be a smooth, projective, geometrically connected variety over $\F_{q}$ and let $d = \dim X$. Then 
\begin{enumerate}[\quad (1)]
  \item (Weil 1948, Dwork 1960) $Z(X,t)$ is a rational function. 
  \item (Grothendieck 1965, Deligne 1974) $Z(X,t)$ satisfies the functional equation 
$$
Z(q^{-n}t^{-1}) = \epsilon q^{nE/2}t^{E}Z(t)
$$
  where $\epsilon = \pm 1$ and $E$ is the self-intersection number of the diagonal $X\hookrightarrow X\times X$. 
  \item (Deligne 1974) As a rational function, 
$$
Z(X,t) = \frac{P_{1}(t)P_{3}(t)\cdots P_{2d - 1}(t)}{P_{0}(t)P_{2}(t)\cdots P_{2d}(t)}
$$
with $P_{2d}(t) = 1 - q^{n}t$ and all other $P_{i}$ integer-valued polynomials in $t$ with roots $(\alpha_{ij})$ satisfying $|\alpha_{ij}| = q^{i/2}$. 
\end{enumerate}
\end{thm}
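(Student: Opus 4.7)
The plan is to deduce all three statements from the existence of a Weil cohomology theory for varieties over $\F_q$, specifically $\ell$-adic \'etale cohomology for a prime $\ell\neq\char(\F_q)$. Write $H^{i}(X) := H^{i}_{\et}(X_{\bar{k}},\Q_{\ell})$, a finite-dimensional $\Q_{\ell}$-vector space carrying an action of the geometric Frobenius $\varphi = \Frob_{q}$. The starting point is the Grothendieck--Lefschetz trace formula
$$
\#X(\F_{q^{n}}) \;=\; \sum_{i=0}^{2d} (-1)^{i}\,\tr\!\left(\varphi^{n}\,\big|\,H^{i}(X)\right),
$$
which expresses the point counts on the left-hand side of the definition of $Z(X,t)$ in terms of Frobenius eigenvalues.

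For part (1), I would substitute the trace formula into the exponential definition of $Z(X,t)$ and use the formal identity $\exp\bigl(\sum_{n\geq 1} \tr(F^{n})\,t^{n}/n\bigr) = \det(1-tF)^{-1}$ applied to each $H^{i}(X)$. This yields
$$
Z(X,t) \;=\; \prod_{i=0}^{2d} \det\!\left(1-t\varphi\,\big|\,H^{i}(X)\right)^{(-1)^{i+1}},
$$
which is manifestly rational and simultaneously produces the factorization in part (3) on setting $P_{i}(t) = \det(1-t\varphi\mid H^{i}(X))$. The fact that $P_{2d}(t) = 1 - q^{d}t$ comes from $H^{2d}(X)\cong\Q_{\ell}(-d)$ for $X$ smooth projective geometrically connected, on which $\varphi$ acts by $q^{d}$. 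Integrality of the coefficients of $P_{i}$ requires the independence-of-$\ell$ statement (known in this setting as a consequence of Deligne's work plus Poincar\'e duality).

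For part (2), the functional equation follows from Poincar\'e duality in \'etale cohomology: there is a perfect Frobenius-equivariant pairing $H^{i}(X)\times H^{2d-i}(X)\to H^{2d}(X)\cong\Q_{\ell}(-d)$. This forces a bijection between the eigenvalues of $\varphi$ on $H^{i}$ and those on $H^{2d-i}$ sending $\alpha\mapsto q^{d}/\alpha$. Translating this symmetry into a relation between $Z(X,t)$ and $Z(X,q^{-d}t^{-1})$ gives the stated functional equation, with the exponent $E$ emerging as $\sum_{i}(-1)^{i}\dim H^{i}(X)$, which by the Lefschetz trace formula applied to the diagonal equals the self-intersection number $(\Delta\cdot\Delta)$ on $X\times X$.

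Part (3) --- the Riemann hypothesis $|\alpha_{ij}| = q^{i/2}$ --- is the deep part and would be the main obstacle. I would reduce to the case of a smooth projective surface fibered over $\P^{1}$ via a Lefschetz pencil, and then follow Deligne's strategy: study the middle cohomology of the pencil via a Rankin--Selberg-type argument, using positivity of the even tensor powers of the relevant $L$-functions of lisse sheaves to force the absolute values of Frobenius eigenvalues into the correct range, and then bootstrap using duality and the weak Lefschetz theorem. Any honest writeup of this step is far beyond the scope of this expository article, so in practice I would cite \cite{} rather than reproduce it; the earlier parts, by contrast, are a reasonably self-contained formal consequence of the trace formula and Poincar\'e duality.
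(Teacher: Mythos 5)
The paper does not prove this theorem at all: it is quoted as established background, with the proofs attributed to Weil, Dwork, Grothendieck and Deligne, and the article immediately moves on to the product formula (Proposition~\ref{prop:HWzetaprod}), which is the only statement in this subsection it actually argues. So there is no proof in the paper to compare yours against; the relevant comparison is with the literature. Measured against that, your outline is the standard cohomological argument and is accurate where it is concrete. The deduction of rationality and of the factorization from the Grothendieck--Lefschetz trace formula together with the identity $\exp\bigl(\sum_{n\geq 1}\tr(F^{n})t^{n}/n\bigr)=\det(1-tF)^{-1}$ is correct, as is the derivation of the functional equation from Poincar\'e duality and the identification of $E$ with $\sum_{i}(-1)^{i}\dim H^{i}(X)=(\Delta\cdot\Delta)$. (You also silently correct two typos in the statement as printed: the exponent in $P_{2d}$ and in the functional equation should be $d$, not $n$.) One historical caveat: this route proves rationality via Grothendieck's machinery, whereas Dwork's 1960 proof was $p$-adic analytic and cohomology-free; since the theorem only asserts rationality, that is not a gap, just a different provenance than the attribution suggests.

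The genuinely incomplete part of your sketch is, as you acknowledge, the bound $|\alpha_{ij}|=q^{i/2}$. Your summary of Deligne's strategy (Lefschetz pencils, a Rankin--Selberg-style positivity argument on even tensor powers, then bootstrapping with duality and weak Lefschetz) is right in spirit but is a description rather than a proof, and the empty citation would need to be filled in with Deligne's \emph{La conjecture de Weil I}. You should also note that the integrality and $\ell$-independence of the $P_{i}$, which you invoke in passing, are themselves consequences of the full Riemann hypothesis part rather than of the trace formula alone. For an expository article of this kind, stating the theorem with references, exactly as the paper does, is the appropriate level of detail.
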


As in previous sections, we will focus on the algebraic properties of $Z(X,t)$ and ignore questions of convergence. One such property is the following product formula. 

\begin{prop}
\label{prop:HWzetaprod}
If $X$ is a variety over $\F_{q}$, then 
$$
Z(X,t) = \prod_{x\in |X|} (1 - t^{\deg(x)})^{-1}
$$
where $|X|$ denotes the set of closed points of $X$ and $\deg(x) = [k(x) : k]$ is the degree of a closed point. 
\end{prop}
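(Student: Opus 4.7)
The plan is to take logarithms of both sides and reduce the identity to a bookkeeping statement about how closed points of $X$ contribute to $\F_{q^n}$-rational points. Since $Z(X,t)$ is defined as $\exp$ of a power series, and $\log(1-u)^{-1} = \sum_{m\geq 1} u^m/m$, the identity is formally equivalent to
\begin{equation*}
\sum_{n=1}^{\infty} \frac{\#X(\F_{q^n})}{n}\, t^n \;=\; \sum_{x\in|X|} \sum_{m=1}^{\infty} \frac{t^{m\deg(x)}}{m}.
\end{equation*}
So the entire problem is to check this equality of formal power series in $t$.

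The key input from algebraic geometry is the interpretation of closed points of $X$. A closed point $x\in|X|$ has residue field $k(x)$, a finite extension of $\F_q$ of degree $d=\deg(x)$, and corresponds to a Galois orbit of size $d$ inside $X(\overline{\F_q})$ under $\Gal(\overline{\F_q}/\F_q)$. Such an orbit consists of $\F_{q^n}$-rational points precisely when $d\mid n$, in which case it contributes exactly $d$ points to $X(\F_{q^n})$. This yields the counting formula
\begin{equation*}
\#X(\F_{q^n}) \;=\; \sum_{\substack{x\in|X|\\ \deg(x)\mid n}} \deg(x),
\end{equation*}
which I would state and briefly justify as the one nontrivial geometric ingredient.

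Given this, the verification of the displayed identity is a change-of-order-of-summation. First I would substitute the counting formula into the left-hand side, then reindex the sum over pairs $(x,n)$ with $\deg(x)\mid n$ by writing $n = m\deg(x)$ for $m\geq 1$:
\begin{equation*}
\sum_{n=1}^{\infty} \frac{t^n}{n} \sum_{\substack{x\in|X|\\ \deg(x)\mid n}} \deg(x)
\;=\; \sum_{x\in|X|} \sum_{m=1}^{\infty} \frac{\deg(x)\cdot t^{m\deg(x)}}{m\deg(x)}
\;=\; \sum_{x\in|X|} \sum_{m=1}^{\infty} \frac{t^{m\deg(x)}}{m},
\end{equation*}
which matches the right-hand side after the factor of $\deg(x)$ cancels. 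Exponentiating recovers the Euler product, and convergence is not an issue since everything is a formal power series in $t$ (the inner sum $\sum_m t^{m\deg(x)}/m$ has no constant term, and only finitely many closed points have any given degree because $X$ is of finite type over $\F_q$, so the infinite product is well-defined).

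The only place where real content enters is the Galois-orbit counting formula for $\#X(\F_{q^n})$; everything else is formal manipulation. That identification is the main (mild) obstacle, and I would either prove it directly from the definition of a closed point in a scheme of finite type over $\F_q$ or cite it as standard.
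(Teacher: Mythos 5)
Your proposal is correct and follows essentially the same route as the paper: both rest on the counting formula $\#X(\F_{q^{n}}) = \sum_{\deg(x)\mid n}\deg(x)$ and then take $\log$/$\exp$ with a reindexing of the double sum. The only difference is that you spell out the ``easy manipulation'' (the substitution $n = m\deg(x)$) explicitly and state the point-counting identification correctly, whereas the paper's prose phrasing of it (``closed points with $\deg(x)\leq n$'') contains a small slip for $\deg(x)\mid n$.
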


\begin{proof}
For any $n\geq 1$, the set $X(\F_{q^{n}})$ may be identified with the set of closed points $x\in |X|$ with $\deg(x) \leq n$. Then 
$$
\#X(\F_{q^{n}}) = \sum_{d\mid n} d\cdot\#\{x\in |X| : \deg(x) = d\}
$$
and an easy manipulation shows that 
$$
\sum_{n = 1}^{\infty} \frac{\#X(\F_{q^{n}})}{n}t^{n} = -\log\left (\prod_{d = 1}^{\infty} (1 - t^{d})^{a_{d}}\right )
$$
where $a_{d} = \#\{x\in |X| : \deg(x) = d\}$ and $\log(\cdot)$ denotes the formal logarithmic power series $\log(1 + t) = \sum_{n = 1}^{\infty} \frac{(-1)^{n + 1}}{n}t^{n}$. Finally, applying $\exp$ gives to this equation yields 
$$
Z(X,t) = \prod_{d = 1}^{\infty} (1 - t^{d})^{-a_{d}} = \prod_{x\in |X|} (1 - t^{\deg(x)})^{-1}. 
$$
\end{proof}

Alternatively, $Z(X,t)$ can be written as the generating function of effective $0$-cycles on $X$, which we recall now. For a variety $X$ over an arbitrary field $k$, the group of $0$-cycles on $X$, denoted $Z_{0}(X)$, is the free abelian group generated by all closed points $x\in |X|$. Every $\alpha\in Z_{0}(X)$ can be written as a sum $\alpha = \sum_{x} a_{x}x$ over $x\in |X|$, where all but finitely many $a_{x}\in\Z$ are zero. An effective $0$-cycle is a $0$-cycle $\alpha = \sum_{x} a_{x}x$ such that $a_{x}\geq 0$ for all $x\in |X|$. The {\bf degree} of a $0$-cycle $\alpha = \sum_{x} a_{x}x$ is the integer $\deg(\alpha) = \sum_{x} a_{x}\deg(x)$. 

It follows from these definitions that for a variety $X$ over $k = \F_{q}$, the Hasse--Weil zeta function of $X$ can be written 
$$
Z(X,t) = \sum_{\alpha\in Z_{0}^{\eff}(X)} t^{\deg(\alpha)}
$$
where $Z_{0}^{\eff}(X)\subseteq Z_{0}(X)$ is the semigroup of effective $0$-cycles on $X$. 

\subsection{Zeta Functions of Arithmetic Schemes}
\label{sec:arithsch}

The number theoretic Riemann and Dedekind zeta functions and the algebro-geometric Hasse--Weil zeta functions come together in the following setting. Let $X$ be a scheme of finite type over $\Spec\Z$, also called an {\it arithmetic scheme}. The zeta function of an arithmetic scheme $X$ is the complex power series 
$$
\zeta_{X}(s) = \prod_{x\in |X|} \left (1 - \frac{1}{N(x)^{s}}\right )^{-1}
$$
where $N(x) = \#(\orb_{X,x}/\frak{m}_{x})$ is the cardinality of the finite residue field of a closed point $x$. 

\begin{rem}
In the literature, this is sometimes also called the {\it Hasse--Weil function} of an arithmetic scheme. Indeed, this was Hasse and Weil's original situation, which was later translated to varieties over finite fields. 
\end{rem}

\begin{prop}
\label{prop:prodformarithsch}
For an arithmetic scheme $X$ and a prime $p$, let $X_{p}$ denote the reduction mod $p$ of $X$, i.e.~$X_{p} = X\times_{\Spec\Z}\Spec\F_{p}$. Then 
$$
\zeta_{X}(s) = \prod_{p \text{ prime}} Z(X_{p},p^{-s})
$$
where $Z(X_{p},t)$ is the Hasse--Weil zeta function of the $\F_{p}$-variety $X_{p}$. 
\end{prop}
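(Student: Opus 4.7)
The plan is to reduce both sides to the same Euler product indexed by closed points, using Proposition~\ref{prop:HWzetaprod} as the engine on the right-hand side and the definition of $\zeta_X(s)$ on the left, matched by a bijection between closed points of $X_p$ and closed points of $X$ lying above $(p)\in\Spec\Z$.

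First, I would rewrite the right-hand side. By Proposition~\ref{prop:HWzetaprod} applied to the $\F_p$-variety $X_p$,
$$
Z(X_p,t) \;=\; \prod_{y\in|X_p|} (1-t^{\deg(y)})^{-1},
$$
where $\deg(y)=[k(y):\F_p]$. Substituting $t=p^{-s}$ and noting that $N(y)=\#k(y)=p^{\deg(y)}$, one obtains
$$
Z(X_p,p^{-s}) \;=\; \prod_{y\in|X_p|} (1-N(y)^{-s})^{-1}.
$$

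Next I would identify $|X_p|$ with the set of closed points of $X$ whose image in $\Spec\Z$ is the closed point $(p)$. The structure morphism $\pi:X\to\Spec\Z$ induces, on the fiber $X_p=X\times_{\Spec\Z}\Spec\F_p$, a homeomorphism onto $\pi^{-1}((p))\subseteq X$; closed points of $X_p$ correspond exactly to closed points $x\in|X|$ with $\pi(x)=(p)$, and their residue fields agree. In particular $N(y)=N(x)$ and the degrees match under this correspondence.

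The essential input — and the main subtlety worth flagging — is that every closed point of $X$ maps to a \emph{closed} point of $\Spec\Z$, so that the closed points of $X$ decompose as the disjoint union $\bigsqcup_{p\text{ prime}}|X_p|$. This is where one uses that $X$ is of finite type over $\Z$: since $\Z$ is a Jacobson ring and finite type extensions of Jacobson rings are Jacobson, the residue field at any closed point $x\in|X|$ is a finite extension of the residue field at $\pi(x)$. A closed point of $X$ cannot lie over the generic point $(0)$ (its residue field would be a finitely generated $\Q$-algebra that is a field, hence equal to $\Q$ by the Nullstellensatz, but then $x$ would not be closed in its fiber unless that fiber is empty). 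Hence every closed point of $X$ lies over some $(p)$, and $|X|=\bigsqcup_p |X_p|$.

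With the bookkeeping in place, the proof collapses to a single manipulation of products:
$$
\prod_{p\text{ prime}} Z(X_p,p^{-s})
\;=\; \prod_{p}\prod_{y\in|X_p|}(1-N(y)^{-s})^{-1}
\;=\; \prod_{x\in|X|}(1-N(x)^{-s})^{-1}
\;=\; \zeta_X(s),
$$
which is the claimed formula. No convergence issues need to be addressed since the identity is being read as an equality of formal Euler products (or, equivalently, of Dirichlet series in the region of absolute convergence). The only step requiring any real care beyond bookkeeping is the closed-points-lie-over-closed-points statement, and this is handled by invoking the Jacobson property of arithmetic schemes.
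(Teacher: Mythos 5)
Your argument is correct and is exactly the route the paper intends: its entire proof is ``Use Proposition~\ref{prop:HWzetaprod}'', and you have simply carried out that reduction in full, including the (worthwhile) observation that closed points of an arithmetic scheme lie over closed points of $\Spec\Z$, so that $|X|=\bigsqcup_p|X_p|$ and the Euler products match up.
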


\begin{proof}
Use Proposition~\ref{prop:HWzetaprod}. 
\end{proof}

\begin{ex}
For $X = \Spec\Z$ itself, we have $X_{p} = \Spec\F_{p}$ for all primes $p$. Moreover, 
$$
Z(\Spec\F_{p},t) = \exp\left [\sum_{n = 1}^{\infty} \frac{1}{n}t^{n}\right ] = \exp[-\log(1 - t)] = (1 - t)^{-1}. 
$$
Hence by Proposition~\ref{prop:prodformarithsch}, 
$$
\zeta_{\Spec\Z}(s) = \prod_{p\text{ prime}} (1 - p^{-s})^{-1} = \zeta_{\Q}(s), 
$$
the Riemann zeta function. It would not be unreasonable to view this as the \emph{definition} of the Riemann zeta function: it is the zeta function attached to the terminal object $\Spec\Z$ in the category of schemes. 
\end{ex}

\begin{ex}
Let $K/\Q$ be a number field with ring of integers $\orb_{K}$. Then $X = \Spec\orb_{K}$ has reductions $X_{p} = \Spec(\orb_{K}/p\orb_{K}) \cong \coprod_{i = 1}^{r} \Spec(\orb_{K}/\frak{p}_{i}^{e_{i}})$ where $p\orb_{K} = \prod_{i = 1}^{r} \frak{p}_{i}^{e_{i}}$ for distinct prime ideals $\frak{p}_{i}\subset\orb_{K}$ and integers $e_{i}\geq 1$. Let $f_{i} = f(\frak{p}_{i}\mid p) = \dim_{\F_{p}}(\orb_{K}/\frak{p}_{i})$ be the inertia degree of $\frak{p}_{i}$ over $p$. As a result, 
$$
\#X_{p}(\F_{p^{n}}) = \sum_{i = 1}^{r} \#\Spec(\orb_{K}/\frak{p}_{i}^{e_{i}})(\F_{p^{n}}) = \sum_{i = 1}^{r} \#\Hom(\orb_{K}/\frak{p}_{i}^{e_{i}},\F_{p^{n}}). 
$$
For each $\frak{p}_{i}$ lying over $p$, there is a $\F_{p}$-linear map $\orb_{K}/\frak{p}_{i}^{e_{i}}\rightarrow\F_{p^{n}}$ exactly when $f_{i}\mid n$, and in this case $\#\Hom(\orb_{K}/\frak{p}_{i}^{e_{i}},\F_{p^{n}}) = f_{i}$. Thus the local factor at $p$ is 
\begin{align*}
  Z(X_{p},t) &= \exp\left [\sum_{n = 1}^{\infty}\sum_{i = 1}^{r} \frac{\#\Hom(\orb_{K}/\frak{p}_{i}^{e_{i}},\F_{p^{n}})}{n}t^{n}\right ] = \prod_{i = 1}^{r} \exp\left [\sum_{n = 1}^{\infty} \frac{\#\Hom(\orb_{K}/\frak{p}_{i}^{e_{i}},\F_{p^{n}})}{n}t^{n}\right ]\\
    &= \prod_{i = 1}^{r} \exp\left [\sum_{n = 1}^{\infty} \frac{1}{n}t^{f_{i}n}\right ] = \prod_{i = 1}^{r} \exp[-\log(1 - t^{f_{i}})] = \prod_{i = 1}^{r} (1 - t^{f_{i}})^{-1}
\end{align*}
Evaluating at $t = p^{-s}$ gives  
$$
Z(X_{p},p^{-s}) = \prod_{i = 1}^{r} (1 - (p^{-s})^{f_{i}})^{-1} = \prod_{i = 1}^{r} (1 - p^{-f_{i}s})^{-1}. 
$$
Recall that for each $\frak{p}_{i}$ lying over $p$, $N(\frak{p}_{i}) = p^{f_{i}}$. Putting the factors together using Proposition~\ref{prop:prodformarithsch}, we have 
$$
\zeta_{\Spec\orb_{K}}(s) = \prod_{p\text{ prime}} \prod_{i = 1}^{r} (1 - p^{-f_{i}s})^{-1} =  \prod_{\frak{p}\in\Spec\orb_{K}} (1 - N(\frak{p})^{-s})^{-1}
$$
which is precisely the Dedekind zeta function for $K/\Q$. Said another way, $\zeta_{K}(s)$ is the zeta function attached to the terminal object in the category of $\orb_{K}$-schemes. 
\end{ex}

\begin{ex}
\label{ex:globalzetaAn}
Let $X = \A_{\Z}^{n}$ be affine $n$-space over the integers. Then for each prime $p$, $X_{p} = \A_{\F_{p}}^{n}$ which has $\#\A^{n}(\F_{p^{k}}) = p^{nk}$ points over any $\F_{p^{k}}$. Thus 
$$
Z(X_{p},t) = \exp\left [\sum_{k = 1}^{\infty} \frac{p^{nk}}{k}t^{k}\right ] = \exp\left [\sum_{k = 1}^{\infty} \frac{1}{k}(p^{n}t)^{k}\right ] = (1 - p^{n}t)^{-1}. 
$$
By Proposition~\ref{prop:prodformarithsch}, 
$$
\zeta_{\A_{\Z}^{n}}(s) = \prod_{p\text{ prime}} (1 - p^{n}p^{-s})^{-1} = \zeta_{\Q}(s - n). 
$$
\end{ex}

\begin{lem}
\label{lem:arithzetadecomp}
If $Z\hookrightarrow X$ is a closed arithmetic subscheme with complement $U = X\smallsetminus Z$, then $\zeta_{X}(s) = \zeta_{Z}(s)\zeta_{U}(s)$. 
\end{lem}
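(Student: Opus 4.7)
The plan is to read off the equality directly from the Euler-product definition
$$
\zeta_{X}(s) = \prod_{x\in |X|}\left(1 - N(x)^{-s}\right)^{-1}.
$$
The content of the lemma then reduces to a combinatorial statement about closed points: the inclusions $Z\hookrightarrow X$ and $U\hookrightarrow X$ induce a bijection $|X| \longleftrightarrow |Z|\sqcup |U|$ under which the local norm $N(x)$ is preserved. Once this is established, the product factors term by term and the conclusion is immediate.

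First I would handle the closed piece. Since $Z\hookrightarrow X$ is a closed immersion, for any $x\in Z$ the singleton $\{x\}$ is closed in $X$ if and only if it is closed in $Z$, and the standard description of local rings of a closed subscheme gives a canonical isomorphism of residue fields $\orb_{X,x}/\frak{m}_{X,x} \cong \orb_{Z,x}/\frak{m}_{Z,x}$. Hence $N(x)$ is unchanged whether computed in $X$ or in $Z$. The open piece is slightly more delicate: for $u\in U$, it is clear that $u$ closed in $X$ implies $u$ closed in $U$, but the converse needs input. Here I would appeal to the fact that any scheme of finite type over $\Spec\Z$ is a Jacobson scheme, so its closed points are intrinsically characterized as those points whose residue field is finite. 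Because open immersions induce isomorphisms on stalks, the residue field of $u$ is the same in $U$ as in $X$, so $u$ is closed in $U$ iff it is closed in $X$, and again $N(u)$ is intrinsic to the point.

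Putting the two cases together yields the disjoint decomposition $|X| = |Z|\sqcup |U|$ with matching norms, so
$$
\zeta_{X}(s) = \prod_{x\in |Z|}(1 - N(x)^{-s})^{-1}\cdot\prod_{x\in |U|}(1 - N(x)^{-s})^{-1} = \zeta_{Z}(s)\,\zeta_{U}(s).
$$
The only genuine subtlety, and hence the step I would be most careful with, is the Jacobson argument for $U$: that the naively ``locally closed'' points of $U$ really do exhaust the closed points of $X$ lying in $U$, with no loss or duplication. Every other step is formal bookkeeping once this point-set identification is secured.
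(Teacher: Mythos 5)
Your argument is correct, but it takes a different route from the paper. The paper's proof is a one-line reduction: it invokes Proposition~\ref{prop:prodformarithsch} to write $\zeta_{X}(s) = \prod_{p} Z(X_{p},p^{-s})$ and then appeals to the analogous decomposition $Z(X_{p},t) = Z(Z_{p},t)Z(U_{p},t)$ for varieties over finite fields, which at bottom is the additivity of point counts $\#X_{p}(\F_{p^{n}}) = \#Z_{p}(\F_{p^{n}}) + \#U_{p}(\F_{p^{n}})$ fed through the exponential generating function. You instead work directly with the Euler product over closed points of the arithmetic scheme and establish the partition $|X| = |Z|\sqcup |U|$ with matching residue fields. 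Both proofs ultimately rest on the same set-theoretic decomposition of points, but yours avoids the detour through the fibers $X_{p}$ and the $\exp$--$\log$ manipulation entirely, at the cost of having to justify the scheme-theoretic point-set bookkeeping yourself. You correctly identify the one nontrivial step: that a closed point of the open piece $U$ is closed in $X$, which fails for general schemes but holds here because finite-type $\Z$-schemes are Jacobson (equivalently, closed points are exactly those with finite residue field, an intrinsic local condition preserved by open immersion). The paper's route quietly absorbs this same issue into the statement that $|X_{p}|$ is partitioned by $|Z_{p}|$ and $|U_{p}|$, where it is easier since everything is of finite type over a field. Your version is self-contained and arguably more illuminating about why the local factors literally redistribute; the paper's is shorter because it leans on a standard fact about varieties over finite fields.
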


\begin{proof}
Use Proposition~\ref{prop:prodformarithsch} and the inclusion-exclusion formula for the point counts over each finite field, or see \cite[Rem.~6.32]{mus}. 
\end{proof}

\begin{ex}
For $X = \P_{\Z}^{1}$, we can decompose $\P_{\Z}^{1} = \A_{\Z}^{1}\cup\{\infty\}$ where the point $\infty$ is treated as a closed subscheme $\Spec\Z\hookrightarrow\P_{\Z}^{1}$. Then by Lemma~\ref{lem:arithzetadecomp}, 
$$
\zeta_{\P_{\Z}^{1}}(s) = \zeta_{\Q}(s - 1)\zeta_{\Q}(s). 
$$
More generally, 
$$
\zeta_{\P_{\Z}^{n}}(s) = \zeta_{\Q}(s - n)\cdots\zeta_{\Q}(s - 1)\zeta_{\Q}(s). 
$$
\end{ex}

In general, it is expected that $\zeta_{X}(s)$ has meromorphic continuation to $\C$ and satisfies a functional equation analogous to that of the Riemann and Dedekind zeta functions, at least when $X$ is nice (i.e.~regular and proper over $\Spec\Z$). Further, there are deep conjectures about the values, order of poles, etc.~of $\zeta_{X}(s)$ which we won't spell out here. Suffice it to say that these are some of the most sought-after results in all of number theory. For our purposes, arithmetic zeta functions are one motivation for developing a more general framework to handle different types of zeta functions simultaneously. 

\subsection{$L$-Functions}
\label{sec:Lfun}

Of course the focus in much of modern number theory is not just on zeta functions, but on $L$-functions. Fortunately, these fit into the same general framework described above. Historically, an $L$-function is either a generalization of the Riemann zeta function -- an Artin $L$-function -- or an analytic object with similar properties to an Artin $L$-function, such as the $L$-function of an automorphic form. A common feature of these different classes of $L$-functions is that they can be expressed as a Dirichlet series 
$$
L(s) = \sum_{n = 1}^{\infty} \frac{a_{n}}{n^{s}}. 
$$
Given such an expression, the $L$-function naturally corresponds to an element $L : n\mapsto a_{n}$ in the algebra $A$ of arithmetic functions from Section~\ref{sec:zetafunctions}. In this way, algebraic properties of $L$-functions may be encoded by the convolution product on $A$, cf.~\cite{ak1}. 

More explicitly, let $K/\Q$ be a number field with Galois group $G$ and fix a Galois module $V$ corresponding to some $n$-dimensional representation $\rho$ of $G$. The $L$-function of $V$ is defined as 
$$
L(V,s) = \prod_{\frak{p}} L_{\frak{p}}(V,s)
$$
where each prime factor $L_{p}(V,s)$ is the characteristic polynomial of the Frobenius at $p$ acting on the inertia-invariant vectors of $V$: 
$$
L_{\frak{p}}(V,s) = \frac{1}{\det(1 - N(\frak{p})^{-s}\Frob_{\frak{p}} \mid V^{I_{\frak{p}}})}
$$
where $I_{\frak{p}}$ is the inertia subgroup of $G$ for the prime $\frak{p}\subset\orb_{K}$. Grouping together the factors $L_{\frak{p}}(V,s)$ for all $\frak{p}$ lying over some fixed prime $p$, we get a Dirichlet series 
$$
L_{p}(V,s) := \prod_{\frak{p}\mid p} L_{\frak{p}}(V,s)
$$
for each rational prime $p$. Taking $V$ to be the trivial $1$-dimensional representation of $G$, it is immediate that each $\Frob_{\frak{p}}$ acts as the identity on $V^{I_{\frak{p}}}$, so the local factors of $L(V,s)$ are just the local factors of the Dedekind zeta function: 
$$
L_{\frak{p}}({\bf 1},s) = \frac{1}{1 - N(\frak{p})^{-s}}. 
$$
That is, $L({\bf 1},s) = \zeta_{K}(s)$. By class field theory, the $L$-function of any $1$-dimensional representation is in fact a Dirichlet $L$-function $L(\chi,s)$ for some Dirichlet character $\chi : \Z\rightarrow\C$. Already at this level, we can encode interesting algebraic relations among $L(\chi,s)$ in the convolution algebra $A$; for example, see \cite{ak1} for an objective proof of the formula 
$$
\zeta_{K}(s) = \zeta_{\Q}(s)L(\chi,s)
$$
for a quadratic extension $K/\Q$ with quadratic Dirichlet character $\chi$. 

If $L/K$ is an arbitrary extension of number fields with Galois group $G$, Artin $L$-functions $L(V,s)$ are constructed for any representation of $G$ in a similar fashion as above. When $V$ is a $1$-dimensional representation, class field theory once again prescribes a Hecke character $\chi : I_{K}\rightarrow\C$, where $I_{K}$ is the group of fractional ideals of $\orb_{K}$. More generally, the Langlands program predicts a correspondence between Artin $L$-functions and $L$-functions of automorphic representations of $G$. 

\begin{rem}
\label{rem:Lfun}
Artin $L$-functions are usually defined ``over $\Q$'', that is, as Dirichlet series which correspond to functions in the convolution algebra $A = A_{\Q}$. However, it is equally valid to interpret them as elements of the convolution algebra $A_{K}$ where $K$ is the ground field of the extension defining the $L$-functions. This slight shift in perspective allows us to unpack algebraic properties of $L(V,s)$ in $A_{K}$ directly, before passing to $A_{\Q}$ using the pushforward map described in Remark~\ref{rem:pushfwd}. 
\end{rem}


\section{Decomposition Spaces and Incidence Coalgebras}
\label{sec:incidencecoalgs}

\renewcommand{\A}{\mathbb{A}}

In this section, we survey the theory of decomposition spaces, originating from the work of G\'{a}lvez-Carrillo, Kock and Tonks in \cite{gkt1,gkt2,gkt3}. This theory is developed in order to discuss incidence algebras in their most general form. In particular, the incidence algebra of a decomposition space has a canonical element called the {\it zeta functor} which generalizes most existing examples of zeta functions. To motivate the formalism of decomposition spaces, we first show how number theoretic and algebro-geometric zeta functions can be recovered from incidence algebras. 

\subsection{Motivating Examples}
\label{sec:posets}

The Riemann, Dedekind and Hasse--Weil zeta functions all arise from incidence algebras of \emph{posets}, which turn out to be simple examples of decomposition sets (defined in Section~\ref{sec:decompset}). Although the full power of decomposition spaces is not needed in this section, this will motivate the general theory, along with some future applications at the end of the article. 

Let $(\mathcal{P},\leq)$ be a poset and for any elements $x,y\in\mathcal{P}$, define the interval $[x,y]$ by 
$$
[x,y] = \{z\in\mathcal{P}\mid x\leq z\leq y\}. 
$$

\begin{ex}
For the poset $(\N,\leq)$, $[x,y]$ is the usual interval of integers between $x$ and $y$. More useful to us will be the poset $(\N_{0},\leq)$ of nonnegative integers ordered by succession, which is isomorphic to $(\N,\leq)$. 
\end{ex}

\begin{ex}
Let $(\N,\mid)$ denote the divisibility poset of the natural numbers. Then $[x,y] = \{d\in\N : x\mid d\mid y\}$. 
\end{ex}

\begin{ex}
For any number field $K/\Q$, the set of ideals of $\orb_{K}$ forms a poset $(I_{K}^{+},\mid)$ with intervals $[\frak{a},\frak{b}] = \{\frak{d} : \frak{a}\mid\frak{d}\mid\frak{b}\}$. 
\end{ex}

\begin{defn}
A poset $(\mathcal{P},\leq)$ is {\bf locally finite} if every interval $[x,y]$ in $\mathcal{P}$ is a finite set. 
\end{defn}

\begin{rem}
All of the above posets are locally finite. 
\end{rem}

Fix a field $k$. The following definitions go back at least to Stanley \cite{sta}, Rota \cite{rot} and their contemporaries. 

\begin{defn}
The {\bf incidence coalgebra} $C(\mathcal{P})$ of a locally finite poset $(\mathcal{P},\leq)$ over $k$ is the free $k$-vector space on the set of intervals $\{[x,y] : x,y\in\mathcal{P}\}$, together with the comultiplication and counit maps 
\begin{align*}
  \Gamma : C(\mathcal{P}) &\longrightarrow C(\mathcal{P})\otimes C(\mathcal{P})\\
    [x,y] &\longmapsto \sum_{z\in [x,y]} [x,z]\otimes [z,y]\\
  \delta : C(\mathcal{P}) &\longrightarrow k\\
    [x,y] &\longmapsto \delta_{xy} = \begin{cases} 1, & x = y\\ 0, & x\not = y\end{cases}. 
\end{align*}
\end{defn}

\begin{defn}
The {\bf incidence algebra} $I(\mathcal{P})$ of $(\mathcal{P},\leq)$ over $k$ is the $k$-vector space $I(\mathcal{P}) = \Hom_{k}(C(\mathcal{P}),k)$ together with the multiplication map 
\begin{align*}
  * : I(\mathcal{P})\otimes I(\mathcal{P}) &\longrightarrow I(\mathcal{P})\\
    \varphi\otimes\psi &\longmapsto \left (\varphi*\psi : [x,y] \mapsto \sum_{z\in [x,y]} \varphi([x,z])\psi([z,y])\right )
\end{align*}
and unit $1\mapsto \delta$, also written $\delta$ by abuse of notation. 
\end{defn}

By \cite[Thm.~7.4]{gkt1}, the incidence (co)algebra of a locally finite poset is (co)associative and (co)unital. It need not be (co)commutative in general. 

In any incidence algebra $I(\mathcal{P})$ for a locally finite poset $\mathcal{P}$, there is a distinguished element $\zeta : [x,y]\mapsto 1$, called the {\it zeta function} for $\mathcal{P}$. If $\zeta\in I(\mathcal{P})^{\times}$, we denote an inverse by $\mu = \zeta^{-1}$, called the {\it M\"{o}bius function} for $\mathcal{P}$. 

\begin{prop}[M\"{o}bius Inversion]
\label{prop:posetMob}
For any locally finite poset $(\mathcal{P},\leq)$, 
\begin{enumerate}[\quad (1)]
  \item $\mu = \zeta^{-1}$ exists and is defined recursively by 
$$
\mu : [x,y] \longmapsto \begin{cases}
  1, & x = y\\
  -\sum_{z\in [x,y)} \mu([x,z]), & x\not = y. 
\end{cases}
$$
  \item (Rota's Formula) For any $f,g\in I(\mathcal{P})$, if $f = g*\zeta$ then $g = f*\mu$. That is, 
$$
\text{if}\quad f([x,y]) = \sum_{z\in [x,y]} g([x,z]) \quad\text{then}\quad g([x,y]) = \sum_{z\in [x,y]} f([x,z])\mu([z,y]). 
$$
\end{enumerate}
\end{prop}

To recover classical versions such as Corollary~\ref{cor:classicalMob}, it is useful to pass to the {\it reduced incidence algebra} $\widetilde{I}(\mathcal{P})$, the subalgebra of $I(\mathcal{P})$ consisting of $\varphi$ that are constant on isomorphism classes of intervals (considered as abstract posets). Alternatively, $\widetilde{I}(\mathcal{P})$ is the dual of the coalgebra $\widetilde{C}(\mathcal{P})$ of isomorphism classes of intervals, cf.~\cite[2.5]{koc}. 

\begin{ex}
\label{ex:leqposet}
For the poset $(\N_{0},\leq)$ and $k = \C$, the M\"{o}bius function is 
$$
\mu([x,y]) = \begin{cases}
  1, & x = y\\
  -1, & y = x + 1\\
  0, &\text{ otherwise}. 
\end{cases}
$$
Here, every interval is isomorphic to one of the form $[0,y - x]$ for $y\geq x$. For an element $f\in\widetilde{I}(\N_{0},\leq)$ which is a priori a function on the intervals of $(\N_{0},\leq)$, we write $f(n) = f([0,n])$. In this case, M\"{o}bius inversion (applied to the reduced incidence algebra $\widetilde{I}(\N_{0},\leq)$) says that 
$$
f(n) = \sum_{i\leq n} g(i) \implies g(n) = f(n) - f(n - 1). 
$$
For the poset $(\N_{0},\leq)$, we can also interpret functions $f : \N\rightarrow\C$ in terms of their generating functions 
$$
F(z) = \sum_{n = 0}^{\infty} f(n)z^{n}. 
$$
Ignoring questions of convergence, we have 
$$
\sum_{n = 0}^{\infty} \zeta(n)z^{n} = \sum_{n = 0}^{\infty} z^{n} = \frac{1}{1 - z} \quad\text{and}\quad \sum_{n = 0}^{\infty} \mu(n)z^{n} = 1 - z
$$
as M\"{o}bius inversion predicts. 
\end{ex}

\begin{ex}
\label{ex:divposet}
For the poset $(\N,\mid)$ and $k = \C$, the M\"{o}bius function is precisely the classical $\mu$ from Example~\ref{ex:classicalMobfunction} and multiplication in $\widetilde{I}(\N,\mid)$ is Dirichlet convolution. That is, $\widetilde{I}(\N,\mid)$ is precisely the convolution algebra $A$ from Section~\ref{sec:riemannzeta}. Then Proposition~\ref{prop:classicalMob} and Corollary~\ref{cor:classicalMob} follow directly from Proposition~\ref{prop:posetMob}. For each prime $p$, consider the subposet of $p$th powers $(\{p^{k}\},\mid)\subseteq (\N,\mid)$. Then it is clear that $(\{p^{k}\},\mid)\cong (\N_{0},\leq)$ as posets, via $p^{k}\leftrightarrow k$. By the fundamental theorem of arithmetic, $(\N,\mid)$ decomposes as a restricted product of posets 
$$
(\N,\mid) \cong \prod_{p\text{ prime}}\!\!\!\!'\, (\{p^{k}\},\mid) \cong \prod_{p\text{ prime}}\!\!\!\!'\, (\N_{0},\leq). 
$$
By Example~\ref{ex:leqposet}, the M\"{o}bius function for each $(\{p^{k}\},\mid)$ can be written $\mu_{(\{p^{k}\},\mid)} = \delta - \delta_{p}$, where 
$$
\delta_{p}(k) = \begin{cases}
  1, & k = 1\\
  0, & k\not = 1. 
\end{cases}
$$
Then under the decomposition of $(\N,\mid)$ above, we have 
$$
\mu_{(\N,\mid)} = \bigotimes_{p\text{ prime}} \mu_{(\{p^{k}\},\mid)} = \bigotimes_{p\text{ prime}} (\delta - \delta_{p}). 
$$
In terms of Dirichlet functions, this means 
$$
\mu_{\Q}(s) = \sum_{n = 1}^{\infty} \frac{\mu(n)}{n^{s}} = \prod_{p\text{ prime}} \left (\sum_{n = 1}^{\infty} \frac{\delta(n)}{n^{s}} - \sum_{n = 1}^{\infty} \frac{\delta_{p}}{n^{s}}\right ) = \prod_{p\text{ prime}} \left (1 - \frac{1}{p^{s}}\right ). 
$$
Finally, since $\zeta_{\Q}(s) = \mu_{\Q}(s)^{-1}$, we recover Euler's product formula 
$$
\zeta_{\Q}(s) = \prod_{p\text{ prime}} \left (1 - \frac{1}{p^{s}}\right )^{-1}. 
$$
\end{ex}

\begin{ex}
More generally, for a number field $K/\Q$, consider the poset $(I_{K}^{+},\mid)$, which is locally finite by unique factorization of ideals in $\orb_{K}$. The M\"{o}bius function for this poset is the function $\mu$ defined in Example~\ref{ex:DedekindMob} and once again, multiplication in the reduced incidence algebra $\widetilde{I}(I_{K}^{+},\mid)$ is Dirichlet convolution, so we recover Proposition~\ref{prop:DedekindMob} and Corollary~\ref{cor:DedekindMob} from the general case in Proposition~\ref{prop:posetMob}. For each prime ideal $\frak{p}\in\Spec\orb_{K}$, the subposet $(\{\frak{p}^{k}\},\mid)\subseteq (I_{K}^{+},\mid)$ is isomorphic to $(\N_{0},\leq)$, so $(I_{K}^{+},\mid)$ decomposes as 
$$
(I_{K}^{+},\mid) \cong \prod_{\frak{p}\in\Spec\orb_{K}} (\{\frak{p}^{k}\},\mid) \cong \prod_{\frak{p}\in\Spec\orb_{K}} (\N_{0},\leq). 
$$
Then by Example~\ref{ex:leqposet}, the M\"{o}bius function for $(\{\frak{p}^{k}\},\mid)$ is $\mu_{(\{\frak{p}^{k}\},\mid)} = \delta - \delta_{\frak{p}}$, where 
$$
\delta_{\frak{p}}(\frak{a}) = \begin{cases}
  1, & \frak{a} = \frak{p}\\
  0, & \frak{a}\not = \frak{p}. 
\end{cases}
$$
Then the M\"{o}bius function of $(I_{K}^{+},\mid)$ decomposes as 
$$
\mu_{(I_{K}^{+},\mid)} = \bigotimes_{\frak{p}\in\Spec\orb_{K}} \mu_{(\{\frak{p}^{k}\},\mid)} = \bigotimes_{\frak{p}\in\Spec\orb_{K}} (\delta - \delta_{\frak{p}}). 
$$
Passing to Dirichlet functions, we get 
$$
\mu_{K}(s) = \sum_{\frak{a}\in I_{K}^{+}} \frac{\mu(\frak{a})}{N(\frak{a})^{s}} = \prod_{\frak{p}\in\Spec\orb_{K}} \left (\sum_{\frak{a}\in I_{K}^{+}} \frac{\delta(\frak{a})}{N(\frak{a})^{s}} - \sum_{\frak{a}\in I_{K}^{+}} \frac{\delta_{\frak{p}}(\frak{a})}{N(\frak{a})^{s}}\right ) = \prod_{\frak{p}\in\Spec\orb_{K}} \left (1 - \frac{1}{N(\frak{p})^{s}}\right ). 
$$
Therefore by M\"{o}bius inversion, 
$$
\zeta_{K}(s) = \mu_{K}(s)^{-1} = \prod_{\frak{p}\in\Spec\orb_{K}} \left (1 - \frac{1}{N(\frak{p})^{s}}\right )^{-1}. 
$$
\end{ex}

\begin{ex}
\label{ex:Mobinvvariety}
Let $X$ be a variety over $\F_{q}$ and consider the poset $(Z_{0}^{\eff}(X),\leq)$, where $\alpha \leq \beta$ if and only if $\alpha = \sum_{x} a_{x}x$, $\beta = \sum_{x} b_{x}x$ and $a_{x}\leq b_{x}$ for all $x\in |X|$. Then $(Z_{0}^{\eff}(X),\leq)$ is a locally finite poset with M\"{o}bius function 
$$
\mu : \alpha = \sum_{x} a_{x}x \longmapsto \begin{cases}
  1, & \alpha = 0\\
  0, & a_{x} > 1 \text{ for any } x\\
  (-1)^{r}, & \alpha = x_{1} + \ldots + x_{r} \text{ for distinct closed points } x_{i}. 
\end{cases}
$$
Let $A_{X}$ be the complex vector space of functions $f : Z_{0}^{\eff}(X)\rightarrow\C$ and define their convolution product by 
$$
(f*g)(\alpha) = \sum_{\beta + \gamma = \alpha} f(\beta)g(\gamma)
$$
where the sum is over all partitions of $\alpha$ into effective $0$-cycles, i.e.~$\beta$ and $\gamma$ such that $\beta + \gamma = \alpha$. 

\begin{prop}
\label{prop:Mobinv0cycles}
For a variety $X$ over $\F_{q}$, 
\begin{enumerate}[\quad (1)]
  \item $A_{X}$ is a commutative $\C$-algebra via the convolution product defined above. 
  \item The function 
\begin{align*}
  \delta : Z_{0}^{\eff}(X) &\longrightarrow \C\\
    \alpha &\longmapsto \begin{cases} 1, & \alpha = 0 \\ 0, & \alpha > 0\end{cases}
\end{align*}
  is a unit for convolution, making $A_{X}$ a unital $\C$-algebra. 
  \item In $A_{X}$, we have $\mu*\zeta = \delta = \zeta*\mu$ where $\zeta$ is the zeta function sending $\alpha\mapsto 1$ for all $\alpha$. 
  \item (M\"{o}bius Inversion for $0$-Cycles) For any $f,g\in A_{X}$, 
$$
\text{if}\quad f(\alpha) = \sum_{\beta\leq\alpha} g(\beta) \quad\text{then}\quad g(\alpha) = \sum_{\beta\leq\alpha} f(\beta)\mu(\alpha - \beta). 
$$
\end{enumerate}
\end{prop}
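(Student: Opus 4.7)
The strategy is to identify $A_X$ with the reduced incidence algebra $\widetilde{I}(Z_{0}^{\eff}(X),\leq)$ and then invoke Proposition~\ref{prop:posetMob}. The poset $(Z_{0}^{\eff}(X),\leq)$ is locally finite by construction: since $\alpha = \sum_x a_x x$ has only finitely many nonzero $a_x$, the interval $[\beta,\alpha]$ is contained in the finite product $\prod_{x}\{0,1,\ldots,a_x\}$. Crucially, translation by $-\beta$ gives a poset isomorphism $[\beta,\alpha] \xrightarrow{\sim} [0,\alpha-\beta]$, so every interval is determined up to isomorphism by the single element $\alpha-\beta$. Thus a function $f \in A_X$ corresponds to an interval-constant element $\tilde f \in \widetilde I(Z_{0}^{\eff}(X),\leq)$ via $\tilde f([\beta,\alpha]) = f(\alpha-\beta)$, and this identification is a linear isomorphism.

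Next, I translate the convolution. Evaluating the incidence-algebra product on $[0,\alpha]$ gives
$$
(\tilde f * \tilde g)([0,\alpha]) = \sum_{\gamma \in [0,\alpha]} \tilde f([0,\gamma])\,\tilde g([\gamma,\alpha]) = \sum_{\gamma \leq \alpha} f(\gamma)\,g(\alpha-\gamma),
$$
which, reindexed by $\beta=\gamma$, is precisely the convolution of the proposition. The unit $\delta$ matches the counit of the incidence coalgebra, and commutativity follows because the sum is indexed by ordered pairs $(\beta,\gamma)$ with $\beta+\gamma=\alpha$, a condition symmetric in $\beta$ and $\gamma$. This establishes (1) and (2).

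For (3), I use the decomposition of $(Z_{0}^{\eff}(X),\leq)$ as a poset. Since every effective $0$-cycle is uniquely a finite nonnegative integer combination of closed points, there is an isomorphism of posets
$$
(Z_{0}^{\eff}(X),\leq) \cong \prod_{x \in |X|} (\N_{0},\leq),
$$
where the product is taken in the category of posets (so that almost all coordinates are zero). Möbius functions are multiplicative across products of posets, and Example~\ref{ex:leqposet} computes the factor-wise Möbius function as $\delta - \delta_{x}$. Hence
$$
\mu = \bigotimes_{x \in |X|} (\delta - \delta_{x}),
$$
and evaluating on $\alpha = \sum_{x} a_{x}x$ gives $0$ if some $a_{x} \geq 2$ (because then the $x$-factor contributes $(\delta-\delta_{x})(a_{x}) = 0$), and $(-1)^r$ if $\alpha = x_{1}+\cdots+x_{r}$ is squarefree, matching the stated formula. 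Part (4) is then immediate from Rota's formula (Proposition~\ref{prop:posetMob}(2)) after applying the translation-by-$\beta$ identification $[\beta,\alpha] \cong [0,\alpha-\beta]$.

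No serious obstacle is anticipated: everything is a templated application of the general poset theory already developed in this section. The only point requiring care is the passage to the \emph{reduced} incidence algebra, which is legitimate precisely because of translation invariance: isomorphic intervals have isomorphism types indexed by $Z_{0}^{\eff}(X)$ itself, so that $\widetilde I$-elements are genuinely functions on $Z_{0}^{\eff}(X)$ rather than on pairs.
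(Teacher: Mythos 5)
Your proposal is correct and follows exactly the paper's route: the paper's proof is the one-line observation that the result is ``purely a consequence of Proposition~\ref{prop:posetMob}, after identifying $A_{X}$ with the reduced incidence algebra $\widetilde{I}(Z_{0}^{\eff}(X),\leq)$,'' which is precisely your strategy. You simply make explicit the details the paper leaves implicit (local finiteness, the translation isomorphism $[\beta,\alpha]\cong[0,\alpha-\beta]$, and the product decomposition over closed points used to compute $\mu$, the latter appearing in the paper only in the surrounding discussion of Example~\ref{ex:Mobinvvariety}).
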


\begin{proof}
This is purely a consequence of Proposition~\ref{prop:posetMob}, after identifying $A_{X}$ with the reduced incidence algebra $\widetilde{I}(Z_{0}^{\eff}(X),\leq)$. 
\end{proof}

For any function $f\in A_{X}$, we can form a generating series 
$$
F(t) = \sum_{\alpha\in Z_{0}^{\eff}(X)} f(\alpha)t^{\deg(\alpha)},
$$
analogously to forming a Dirichlet series in the number field case. The following lemma is immediate. 

\begin{lem}
\label{lem:HWdirichletconv}
If $f,g\in A_{X}$ have generating series $F(t) = \sum_{\alpha} f(\alpha)t^{\deg(\alpha)}$ and $G(t) = \sum_{\alpha} g(\alpha)t^{\deg(\alpha)}$, respectively, then 
$$
F(t)G(t) = \sum_{\alpha} (f*g)(\alpha)t^{\deg(\alpha)}. 
$$
\end{lem}

As we saw in Section~\ref{sec:HWzeta}, the Hasse--Weil zeta function $Z(X,t)$ is the generating series for the abstract zeta function $\zeta\in A_{X}$. We will write the generating series for $\mu\in A_{X}$ by 
$$
M(X,t) = \sum_{\alpha\in Z_{0}^{\eff}(X)} \mu(\alpha)t^{\deg(\alpha)}. 
$$
Then Proposition~\ref{prop:Mobinv0cycles} shows that 
$$
Z(X,t) = M(X,t)^{-1} = \left (\sum_{\alpha\in Z_{0}^{\eff}(X)} \mu(\alpha)t^{\deg(\alpha)}\right )^{-1}. 
$$

To see the product formula for $Z(X,t)$ (Proposition~\ref{prop:HWzetaprod}) from another angle, for each closed point $x$ of $X$, consider the subposet $(\{ax : a\in\N_{0}\},\leq)\subseteq (Z_{0}^{\eff},\leq)$ which is isomorphic to $(\N_{0},\leq)$. Then $(Z_{0}^{\eff}(X),\leq)$ decomposes as a restricted product of posets 
$$
(Z_{0}^{\eff}(X),\leq) \cong \prod_{x\in |X|}\!\!\!'\, (\{ax\},\leq) \cong \prod_{x\in |X|}\!\!\!'\, (\N_{0},\leq). 
$$
By Example~\ref{ex:leqposet}, the M\"{o}bius function for $(\{ax\},\leq)$ is $\mu_{(\{ax\},\leq)} = \delta - \delta_{x}$, where 
$$
\delta_{x}(\alpha) = \begin{cases}
  1, & \alpha = x\\
  0, & \alpha\not = x. 
\end{cases}
$$
Then the M\"{o}bius function for $(Z_{0}^{\eff}(X),\leq)$ decomposes as well: 
$$
\mu_{(Z_{0}^{\eff}(X),\leq)} = \bigotimes_{x\in |X|} \mu_{(\{ax\},\leq)} = \bigotimes_{x\in |X|} (\delta - \delta_{x}). 
$$
On the level of generating series, 
$$
M(X,t) = \sum_{\alpha\in Z_{0}^{\eff}(X)} \mu(\alpha)t^{\deg(\alpha)} = \prod_{x\in |X|} (\delta - \delta_{x})t^{\deg(x)} = \prod_{x\in |X|} (1 - t^{\deg(x)}). 
$$
Applying M\"{o}bius inversion yields the product form of the Hasse--Weil zeta function: 
$$
Z(X,t) = M(X,t)^{-1} = \prod_{x\in |X|} (1 - t^{\deg(x)})^{-1}. 
$$
\end{ex}

\subsection{Decomposition Sets}
\label{sec:decompset}

The construction of the incidence algebra of a poset has been generalized for categories with certain finiteness conditions called {\it M\"{o}bius categories} \cite{ler}. However, the authors in \cite{gkt1} identify situations (e.g.~rooted trees with cuts) best explained by incidence algebras and M\"{o}bius inversion, despite no obvious M\"{o}bius category structure. This suggests there is a further generalization of M\"{o}bius categories which capture the theory of incidence algebras. Indeed, this is the original motivation behind the definition of decomposition spaces in \cite{gkt1}. In this section, we define the simplicial set version of the construction before giving the general definition in Section~\ref{sec:decompsp}. 

Let $\Delta$ be the category of {\it combinatorial simplices}: objects of $\Delta$ are the finite sets $[n] := \{0,1,\ldots,n\}$ for $n\geq 0$ and morphisms are order-preserving functions $[n]\rightarrow [m]$. A {\it simplicial set} is a functor $K : \Delta^{op}\rightarrow\cat{Set}$, or explicitly, a collection of sets $K_{0},K_{1},K_{2},\ldots$ together with {\it face and degeneracy maps} 
$$
K_{0} \stack{3} K_{1} \stack{5} K_{2} \stack{7} \cdots
$$
satisfying certain compatibility conditions. 

\begin{defn}
Let $K : \Delta^{op}\rightarrow\cat{Set}$ be a simplicial set which is locally of finite length (cf.~\cite{gkt2} for a precise definition). The {\bf incidence coalgebra} of $K$ is the free $k$-vector space $C(K)$ on $K_{1}$ with comultiplication 
\begin{align*}
  \Gamma : C(K) &\longrightarrow C(K)\otimes C(K)\\
    f &\longmapsto \sum_{d_{1}\sigma = f} d_{2}\sigma\otimes d_{0}\sigma
\end{align*}
where the sum is over all $\sigma\in K_{2}$ with $d_{1}\sigma = f$, and counit 
\begin{align*}
  \delta : C(K) &\longrightarrow k\\
    f &\longmapsto \begin{cases}
      1, &\text{ if $f$ is degenerate}\\
      0, &\text{ if $f$ is nondegenerate}. 
    \end{cases}
\end{align*}
The {\bf incidence algebra} of $K$ is the dual $I(K) = \Hom_{k}(C(K),k)$, equipped with multiplication $m = \Gamma^{*} : I(K)\otimes I(K)\rightarrow I(K)$ and unit $\delta$. 
\end{defn}

As with the algebras of arithmetic functions in Section~\ref{sec:zetafunctions}, multiplication in $I(K)$ is a convolution product: 
$$
\varphi*\psi : f \longmapsto \sum_{d_{1}\sigma = f} \varphi(d_{2}\sigma)\otimes\psi(d_{0}\sigma). 
$$
Let $\zeta\in I(K)$ be the zeta function of $K$, sending $f\mapsto 1$ for all $f\in X_{1}$. 

\begin{ex}
When $K = \mathcal{N}(\D)$ is the nerve of a M\"{o}bius category $\mathcal{D}$, $I(\D) := I(\mathcal{N}(\D))$ agrees with the incidence algebra of $\D$ as defined in \cite{ler}. In particular, the convolution product on $I(\D)$ is given by 
$$
\varphi*\psi : f \longmapsto \sum_{h\circ g = f} \varphi(g)\otimes\psi(h)
$$
for any morphism $f\in\mor(\D)$, where the sum is over all factorizations of $f$ in $\mor(\D)$. The axioms of a M\"{o}bius category ensure such a sum is well-defined. 
\end{ex}

In general, $I(K)$ need not be associative or unital. However, these properties hold when $K$ is a decomposition set. To state the definition, we first need the following notions. 

\begin{defn}
In the category $\Delta$, a morphism $g : [m]\rightarrow [n]$ is {\bf active} if $g(0) = 0$ and $g(m) = n$. On the other hand, $g$ is {\bf inert} if $g(i + 1) = g(i) + 1$ for all $0\leq i\leq m - 1$. 
\end{defn}

In other words, active morphisms ``preserve endpoints'' and inert morphisms ``preserve distances''. 

\begin{defn}
A {\bf decomposition set} is a simplicial set $K : \Delta^{op}\rightarrow\cat{Set}$ that takes any pushout diagram in $\Delta$ of the form 
\begin{center}
\begin{tikzpicture}[scale=2]
  \node at (0,1) (a) {$[p]$};
  \node at (1,1) (b) {$[m]$};
  \node at (0,0) (c) {$[\ell]$};
  \node at (1,0) (d) {$[n]$};
  \draw[->] (b) -- (a);
  \draw[->] (c) -- (a);
  \draw[->] (d) -- (b) node[right,pos=.5] {$f$};
  \draw[->] (d) -- (c) node[above,pos=.5] {$g$};
  \draw (.2,.7) -- (.3,.7) -- (.3,.8);
\end{tikzpicture}
\end{center}
where $f$ is inert and $g$ is active, to a pullback diagram in $\cat{Set}$: 
\begin{center}
\begin{tikzpicture}[scale=2]
  \node at (0,1) (a) {$K_{p}$};
  \node at (1,1) (b) {$K_{m}$};
  \node at (0,0) (c) {$K_{\ell}$};
  \node at (1,0) (d) {$K_{n}$};
  \draw[->] (a) -- (b);
  \draw[->] (a) -- (c);
  \draw[->] (b) -- (d) node[right,pos=.5] {$f^{*}$};
  \draw[->] (c) -- (d) node[above,pos=.5] {$g^{*}$};
  \draw (.2,.7) -- (.3,.7) -- (.3,.8);
\end{tikzpicture}
\end{center}
\end{defn}

\begin{ex}
For any category $\mathcal{C}$, the nerve $\mathcal{N}(\mathcal{C})$ is a decomposition set. This follows from the more general statement for decomposition spaces \cite[Prop.~3.7]{gkt1}. 
\end{ex}

\begin{ex}
Here is a concrete explanation of this class of examples, illustrated by the division poset $(\N,\mid)$. In the simplicial nerve $K$ of this poset, a $0$-simplex is a number $a\in\N$, a $1$-simplex is an interval $[a,b]$ and for every $n\geq 2$, a $n$-simplex is a sequence of intervals $[a_{0},a_{1}],[a_{1},a_{2}],\ldots,[a_{n - 1},a_{n}]$. We can visualize an $n$-simplex as a length $n$ composition $a_{0}\rightarrow a_{1}\rightarrow a_{2}\rightarrow\cdots\rightarrow a_{n - 1}\rightarrow a_{n}$, with each arrow representing an interval, i.e.~a divisibility $a_{i}\mid a_{i + 1}$. If $g : [n]\rightarrow [\ell]$ is an active arrow in $\Delta$, then $g^{*} : K_{\ell}\rightarrow K_{n}$ sends a length $\ell$ sequence $a_{0}\rightarrow a_{1}\rightarrow\cdots\rightarrow a_{\ell}$ to the length $n$ sequence 
$$
a_{0} = a_{g(0)}\rightarrow a_{g(1)}\rightarrow\cdots\rightarrow a_{g(n)} = a_{\ell}. 
$$
In other words, the interior factors $a_{g(1)},\ldots,a_{g(n - 1)}$ are a possibly new way of decomposing the divisibility $a_{0}\mid a_{\ell}$. On the other hand, if $f : [n]\rightarrow [m]$ is an inert arrow in $\Delta$, then $f^{*} : K_{m}\rightarrow K_{n}$ sends a length $m$ sequence $b_{0}\rightarrow b_{1}\rightarrow\cdots\rightarrow b_{m}$ to the length $n$ sequence 
$$
b_{f(0)}\rightarrow b_{f(1)} = b_{f(0) + 1}\rightarrow\cdots\rightarrow b_{f(n)} = b_{f(0) + n}. 
$$
This just records a particular divisibility $b_{i}\mid b_{i + n}$ which was part of the original $b_{0}\mid b_{m}$. 

Let's take $n = 1$ for simplicity. If $[p]$ is the pushout of $[\ell]$ and $[m]$ along $f$ and $g$ as in the definition above, then $p = \ell + m - 1$. To see that $K_{\ell + m - 1}$ is the pullback of $K_{\ell}$ and $K_{m}$ along $g^{*}$ and $f^{*}$, notice that for sequences $a_{0}\rightarrow\cdots\rightarrow a_{\ell}\in K_{\ell}$ and $b_{0}\rightarrow\cdots\rightarrow b_{m}\in K_{m}$ to map to the same sequence $c_{0}\rightarrow c_{1}\in K_{1}$, we must have $a_{0} = c_{0} = b_{f(0)}$ and $a_{\ell} = c_{1} = b_{f(0) + 1}$. This determines a sequence 
$$
a_{0}\rightarrow a_{\ell} = b_{f(0) + 1}\rightarrow b_{f(0) + 2}\rightarrow\cdots\rightarrow b_{f(0) + m}\in K_{\ell + m - 1}. 
$$
Conversely, any such sequence in $K_{\ell + m - 1}$ mapping to the same $c_{0}\rightarrow c_{1}\in K_{1}$ along $g^{*}$ and $f^{*}$ must split as above. Therefore $K_{\ell + m - 1}$ is the correct pullback. In plain language, the inert map picks out a certain subsequence to preserve and the active map prescribes a further decomposition of the first term of that subsequence. 
\end{ex}

\begin{prop}
For a decomposition set $K$, the incidence (co)algebra of $K$ is (co)associative and (co)unital. 
\end{prop}

\begin{proof}
This too follows from a general result for decomposition spaces \cite[Sec.~5.3]{gkt1}. 
\end{proof}

As the authors in \cite{gkt1} explain, a decomposition set is precisely the right structure to be able to define a (co)associative, (co)unital incidence (co)algebra. Most of our important examples so far -- $(\N,\mid)$, $(I_{K}^{+},\mid)$, $(Z_{0}^{\eff}(X),\leq)$ -- fall under the umbrella of decomposition sets and we have seen that the zeta functions in those classical situations all arise from the canonical zeta element in the incidence algebra of the corresponding decomposition set. Nevertheless, two important situations do not admit obvious interpretations using decomposition sets: $L$-functions and motivic zeta functions. 

$L$-functions, for their part, already show up in the incidence algebra of the poset $(\N,\mid)$ by virtue of being Dirichlet series. However, as their coefficients tend to be algebraic numbers, they are not directly amenable to the objective techniques described in Section~\ref{sec:decompsp}; see also \cite{ak1}. Additionally, it is common to regard $L$-functions as a sort of ``twisted'' zeta function, so it is natural to ask for a suitable incidence algebra in which $L(V,s)$ is itself the zeta element, but it is not obvious which decomposition sets give rise to such incidence algebras, if any. In forthcoming work with Jon Aycock, we propose a solution to this problem in the category of simplicial $G$-representations, giving rise to an {\it objective $L$-functor} $L(V)$ for any Galois representation $V$. 

Likewise, motivic zeta functions are out of reach: there is no clear candidate for a decomposition set, not to mention a locally finite poset, that naturally produces the coefficients of $Z_{mot}(X,t)$. Instead, it is natural to replace the category of simplicial sets with the category of simplicial schemes and ask for a suitable analogue of the incidence algebra to house the motivic zeta function. For a partial solution to this problem, see \cite{dh}. 

\subsection{Decomposition Spaces}
\label{sec:decompsp}

In each of the last two situations, we would like to replace the category $\cat{Set}$ with a suitable category $\S$ of spaces, thus passing from set theory to the realm of homotopy theory. In this section, we will take $\S$ to be either the category of simplicial sets or the category of groupoids in order to illustrate the general theory. In \cite{gkt1}, \cite{gkt2} and \cite{gkt3}, as well as related works, the authors work in the $\infty$-category of $\infty$-groupoids. We elect here to keep things as concrete as possible, while noting that such generalizations are readily available. 

\begin{defn}
A {\bf simplicial space} is a functor $X : \Delta^{op}\rightarrow\S$. 
\end{defn}

That is, a simplical space is a collection of spaces $X_{0},X_{1},X_{2},\ldots$ together with {\it face and degeneracy maps} 
$$
X_{0} \stack{3} X_{1} \stack{5} X_{2} \stack{7} \cdots
$$
satisfying certain compatibility conditions. We denote by $s\S$ the category of simplicial spaces. Note that $s\S$ has all limits and colimits and they are computed levelwise. When $\S$ is the category of simplicial sets or the category of groupoids, a {\it discrete simplicial space} is a simplicial space that lies in the essential image of the functor $s\cat{Set}\rightarrow s\S$ induced by the embedding $\cat{Set}\hookrightarrow\S$. 

\begin{defn}
A {\bf decomposition space} is a simplicial space $X : \Delta^{op}\rightarrow\S$ that takes any pushout diagram in $\Delta$ of the form 
\begin{center}
\begin{tikzpicture}[scale=2]
  \node at (0,1) (a) {$[p]$};
  \node at (1,1) (b) {$[m]$};
  \node at (0,0) (c) {$[\ell]$};
  \node at (1,0) (d) {$[n]$};
  \draw[->] (b) -- (a);
  \draw[->] (c) -- (a);
  \draw[->] (d) -- (b) node[right,pos=.5] {$f$};
  \draw[->] (d) -- (c) node[above,pos=.5] {$g$};
  \draw (.2,.7) -- (.3,.7) -- (.3,.8);
\end{tikzpicture}
\end{center}
where $f$ is inert and $g$ is active, to a homotopy pullback diagram in $\S$: 
\begin{center}
\begin{tikzpicture}[scale=2]
  \node at (0,1) (a) {$X_{p}$};
  \node at (1,1) (b) {$X_{m}$};
  \node at (0,0) (c) {$X_{\ell}$};
  \node at (1,0) (d) {$X_{n}$};
  \draw[->] (a) -- (b);
  \draw[->] (a) -- (c);
  \draw[->] (b) -- (d) node[right,pos=.5] {$f^{*}$};
  \draw[->] (c) -- (d) node[above,pos=.5] {$g^{*}$};
  \draw (.2,.7) -- (.3,.7) -- (.3,.8);
\end{tikzpicture}
\end{center}
\end{defn}

We will see that decomposition spaces are in a sense a homotopy-theoretic version of incidence coalgebras. This insight allows us to generalize the algebras of arithmetic functions from Section~\ref{sec:zetafunctions}. 

\begin{ex}
An important example of decomposition spaces is the notion of a {\it Segal space}, due to Rezk \cite{rez} and based on earlier work of Segal \cite{seg}, which generalizes the nerve of a small category in the following way. Let $\mathcal{C}$ be a small category and consider its nerve $\mathcal{N}(\mathcal{C})$ as a simplicial set whose set of $n$th simplices $\mathcal{N}(\mathcal{C})_{n}$ is the set of all strings of $n$ composable morphisms in $\mathcal{C}$ (with obvious face and degeneracy maps). This is an example of a {\it Segal set}, or a simplicial set $K$ such that for every $n\geq 1$, the so-called {\it Segal maps} 
$$
\varphi_{n} : K_{n} \longrightarrow \underbrace{K_{1}\times_{K_{0}}\cdots\times_{K_{0}}K_{1}}_{n}
$$
are bijections. It is an easy consequence of the definition (cf.~\cite[4.4]{rez}) that a simplicial set is a Segal set if and only if it is isomorphic to the nerve of a small category. Rezk upgrades this definition to the context of simplicial spaces by specifying maps of spaces 
$$
\varphi_{n} : K_{n} \longrightarrow \underbrace{K_{1}\times_{K_{0}}^{h}\cdots\times_{K_{0}}^{h}K_{1}}_{n}
$$
where $\times^{h}$ denotes homotopy pullback, and defining a {\it Segal space} to be a simplicial space for which these maps are weak equivalences for all $n\geq 1$. Thus the nerve of a small category is nothing more than a discrete Segal space. Moreover, every Segal space is a decomposition space \cite[Prop.~3.7]{gkt1}. 

In fact, decomposition spaces are precisely the same as Dyckerhoff and Kapranov's notion of {\it $2$-Segal spaces} \cite{dk}, a further generalization of Segal sets. By \cite[Rem.~3.2]{gkt1}, a decomposition space is the same thing as a unital $2$-Segal space, but the unital condition was later shown to be redundant in \cite{fgk}. 
\end{ex}

\subsection{Homotopy Linear Algebra}

As we remarked above, decomposition spaces are a vast generalization of incidence coalgebras. To make this precise, we introduce the reader to the formalism of objective linear algebra (appearing in \cite{lm}) and homotopy linear algebra (as developed in \cite{gkt-hla}). This is a necessary abstraction because the notion of ``free vector space on $1$-simplices'' no longer makes sense in the category of simplicial spaces. Loosely, the idea is to replace vectors and linear maps with spaces and linear functors. In this setting, it is possible to define the incidence coalgebra of a decomposition space and take its homotopy linear algebraic dual to get an incidence algebra. 

The first step, called {\it objective linear algebra}, is to relax our notions of linear algebra a bit. We take the category $\cat{Set}$ to be our `ground field of scalars', together with the rudimentary operations of addition $S + T = S\amalg T$ and multiplication $ST = S\times T$. Notice that taking cardinality recovers ordinary addition and multiplication on our ordinary scalars, but things like subtraction and inverses, when they are defined, need not lift to the realm of sets. In any case, treating $\cat{Set}$ as the ground field recovers enough aspects of linear algebra to be of use. 

A vector can be represented by a map of sets $v : V\rightarrow S$: the `components' of $v$ are the sets $v^{-1}(s)$ for $s\in S$. Thinking of $S$ as a basis for some vector space (more on this in a moment), the component $v^{-1}(s)$ represents not just how many copies of $s$ are in the vector, but \emph{how they are indexed}. Taking cardinality (of finite sets) recovers our more familiar notion of a vector. Scalar multiplication then is taking a product $A\times V\rightarrow V\rightarrow S$. The sum of two vectors $v : V\rightarrow S$ and $w : W\rightarrow S$ is the vector $v + w : V\amalg W\rightarrow S$ given by the universal property of $\amalg$: 
\begin{center}
\begin{tikzpicture}[scale=2]
  \node at (1,1) (b) {$W$};
  \node at (0,0) (c) {$V$};
  \node at (1,0) (d) {$V\amalg W$};
  \node at (2,-1) (e) {$S$};
  \draw[->] (b) -- (d);
  \draw[->] (c) -- (d);
  \draw[->] (b) to[out=-30,in=90] (e);
    \node at (2,.2) {$w$};
  \draw[->] (c) to[out=-60,in=180] (e);
    \node at (.8,-1) {$v$};
  \draw[->,dashed] (d) -- (e);
    \node[rotate=-45] at (1.6,-.4) {$v + w$};
\end{tikzpicture}
\end{center}
Thus the slice category $\cat{Set}_{/S}$ of sets over $S$ (that is, maps $v : V\rightarrow S$) should be regarded as `the vector space with basis $S$'. For this reason, objective linear algebra is sometimes referred to as ``linear algebra with sets''. 

We can also translate linear maps between vector spaces to the objective setting. Suppose for the moment we are dealing with two `finite dimensional' objective vector spaces: slice categories $\cat{Set}_{/S}$ and $\cat{Set}_{/T}$ where $|S| = n < \infty$ and $|T| = m < \infty$. A linear map $\cat{Set}_{/S}\rightarrow\cat{Set}_{/T}$ should then be an analogue of an $m\times n$ matrix of scalars. This can be represented as a map $M\rightarrow S\times T$, which in turn is the same thing as a span $S\leftarrow M\rightarrow T$. One can check 
that the usual operations on matrices, including scalar multiplication, addition and matrix-vector and matrix-matrix multiplication, are encoded by span composition. Here, a scalar (a set) is treated as a span $*\leftarrow S\rightarrow *$ and a vector is viewed as either $*\leftarrow V\rightarrow S$ (an $n\times 1$ matrix, if $|S| = n < \infty$) or $S\leftarrow V\rightarrow *$ (a $1\times n$ matrix), where appropriate. 

So a linear map should correspond to a span $S\leftarrow M\rightarrow T$, but we'd like for such a map to actually be a functor $\cat{Set}_{/S}\rightarrow\cat{Set}_{/T}$. Given a `matrix' $S\xleftarrow{f}M\xrightarrow{g}T$ and a `vector' $V\xrightarrow{v}S$, applying the linear map to the vector is encoded by the composition of spans 
\begin{center}
\begin{tikzpicture}[scale=1.7]
  \node at (-2,0) (0) {$*$};
  \node at (-1,1) (V) {$V$};
  \node at (0,0) (S) {$S$};
  \node at (0,2) (W) {$W$};
  \node at (1,1) (M) {$M$};
  \node at (2,0) (T) {$T$};
  \draw[->] (V) -- (0);
  \draw[->] (V) -- (S) node[above,pos=.5] {$v$};
  \draw[->] (W) -- (V);
  \draw[->] (W) -- (M) node[above,pos=.5] {\phantom{aa} $f^{*}(v)$};
  \draw[->] (M) -- (S) node[above,pos=.5] {$f$};
  \draw[->] (M) -- (T) node[above,pos=.5] {$g$};
\end{tikzpicture}
\end{center}
The output vector is then $W\rightarrow T$, viewed as the larger span $*\leftarrow W\rightarrow T$. More specifically, the map $W\rightarrow T$ is the composition $g_{!}f^{*}(v)$ where $g_{!}$ denotes postcomposition with $g$ and $f^{*}$ denotes the pullback along $f$ in the upper diamond (which is a pullback square). Since $g_{!}$ and $f^{*}$ extend to the slice categories, $g_{!} : \cat{Set}_{/M}\rightarrow\cat{Set}_{/T}$ and $f^{*} : \cat{Set}_{/S}\rightarrow\cat{Set}_{/M}$, it makes sense to take this as a \emph{definition} of a linear map. We will call a functor $a : \cat{Set}_{/S}\rightarrow\cat{Set}_{/T}$ a {\it linear functor} if it factors as $a = g_{!}f^{*}$ for some span $S\xleftarrow{f}M\xrightarrow{g}T$. 

Other operations on vector spaces can be defined in this context as well. The tensor product of two vector spaces $\cat{Set}_{/S}$ and $\cat{Set}_{/T}$ is defined by $\cat{Set}_{/S}\otimes\cat{Set}_{/T} := \cat{Set}_{/S\times T}$. The vector space of linear maps from $S$ to $T$ is the space $\LIN(S,T) := \Fun^{L}(\cat{Set}_{/S},\cat{Set}_{/T})$ of colimit-preserving functors $\cat{Set}_{/S}\rightarrow\cat{Set}_{/T}$ (the superscript $L$ stands for left adjoint, as colimit-preserving functors are the same as left adjoints). Likewise, a vector space dual is given by $(\cat{Set}_{/S})^{*} := \Fun(\cat{Set}_{/S},\cat{Set})$. From the natural equivalence $\cat{Set}_{/S} \simeq \Fun(\cat{Set}_{/S},\cat{Set})$, we recover (cf.~\cite[2.10]{gkt-hla}) the formula 
$$
\LIN(S,T) \simeq (\cat{Set}_{/S\times T})^{*}. 
$$
In particular, $\LIN(S,T)$ is itself an objective vector space and the functors in $\LIN(S,T)$ are given by spans, so they justifiably can be called linear. Plenty more linear algebra can be translated to this objective language, but this suffices for our purposes. 

To promote the above to a {\it homotopy linear algebra}, let $\S$ be the category of spaces. Following \cite{gkt-hla}, we think of $\S$ as our ground field of scalars; a space $S\in\S$ as a basis for the vector space $\S_{/S}$; a morphism $v : V\rightarrow S$, i.e.~an object of $\S_{/S}$, as a vector in the basis $S$; and homotopy products and coproducts as scalar multiplication and addition. Linear maps are a little more delicate to describe. Briefly, the authors in \cite{gkt-hla} construct a category $\LIN$ of ($\infty$-)categories spanned by the slice categories $\S_{/S}$ whose mapping spaces $\LIN(\S_{/S},\S_{/T})$ behave like the spaces of linear functors constructed above. They also construct a tensor product $\S_{/S}\otimes\S_{/T} := \S_{/S\times T}$ and a linear dual $(\S_{/S})^{*} := \Fun(\S_{/S},\S)$ which are also homotopy vector spaces. 

\begin{rem}
As suggested by the parenthetical $\infty$- in the previous paragraph, all of this can be done at the level of $\infty$-categories. Indeed, this is the generality with which the authors in \cite{gkt-hla} state things. Since we do not require the technology of $\infty$-categories in the present article, we leave it to the reader to further explore $\infty$-categorical homotopy linear algebra by reading \cite{gkt-hla}. 
\end{rem}

\subsection{The Incidence Algebra of a Decomposition Space}

Fix a simplical space $X$. 

\begin{defn}
The {\bf incidence coalgebra} of $X$ is the slice category $C(X) := \S_{/X_{1}}$ equipped with linear functors $\Gamma : \S_{/X_{1}}\rightarrow\S_{/X_{1}}\otimes\S_{/X_{1}}$ and $\delta : \S_{/X_{1}}\rightarrow\S$, called {\bf comultiplication} and {\bf counit}, respectively, which are induced by the spans 
$$
\Gamma : X_{1}\xleftarrow{\; d_{1}\;} X_{2}\xrightarrow{(d_{2},d_{0})} X_{1}\times X_{1} \quad\text{and}\quad \delta : X_{1}\xleftarrow{\; s_{0}\;} X_{0}\rightarrow *.
$$
\end{defn}

In the notation above, $\Gamma = (d_{2},d_{0})_{!}d_{1}^{*}$ and $\delta = t_{!}s_{0}^{*}$ where $t : X_{0}\rightarrow *$ is the unique map to the terminal object. 

\begin{prop}[{\cite[Thm.~7.4]{gkt1}}]
If $X$ is a decomposition space, $C(X)$ is a coassociative, counital coalgebra object (homotopy comonoid) in the category $\LIN$, with comultiplication $\Gamma$ and counit $\delta$. 
\end{prop}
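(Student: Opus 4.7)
The plan is to verify coassociativity and counitality directly via span calculus in $\LIN$. Since both $\Gamma$ and $\delta$ are linear functors arising from the spans
\[
X_1 \xleftarrow{d_1} X_2 \xrightarrow{(d_2,d_0)} X_1 \times X_1 \quad\text{and}\quad X_1 \xleftarrow{s_0} X_0 \to *,
\]
every composition in $\LIN$ unwinds into a composition of spans, and a composition of spans is computed by taking a homotopy pullback of the two middle legs. The decomposition space axiom is tailor-made for this: it converts pushouts of inert-active pairs in $\Delta$ into homotopy pullbacks in $\S$, which is exactly what allows us to identify the middle objects of the iterated span compositions with higher simplices $X_n$.

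For coassociativity, the goal is to compute both iterated comultiplications $(\Gamma \otimes \mathrm{id}) \circ \Gamma$ and $(\mathrm{id} \otimes \Gamma) \circ \Gamma$ as spans $X_1 \leftarrow M \to X_1 \times X_1 \times X_1$. In the first case $M$ is the homotopy pullback of $X_2 \xrightarrow{d_0} X_1 \xleftarrow{d_1} X_2$ (coming from matching the right factor of $\Gamma$'s target with the left factor of the next $\Gamma$); in the second case $M$ is the homotopy pullback of $X_2 \xrightarrow{d_2} X_1 \xleftarrow{d_1} X_2$. Each of these pullbacks corresponds, via the decomposition space axiom, to one of the two active-inert pushout presentations of $[3]$ in $\Delta$ (the two planar triangulations of a square), and so both are canonically equivalent to $X_3$. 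The outer legs $M \to X_1$ and $M \to X_1^3$ then simplify via the simplicial identities to the same pair of maps $X_3 \to X_1$ (the ``long edge'') and $X_3 \to X_1 \times X_1 \times X_1$ (the triple of edges picked out by the three inert maps $[1]\hookrightarrow[3]$). Hence both composites are represented by the \emph{same} span $X_1 \xleftarrow{} X_3 \xrightarrow{} X_1 \times X_1 \times X_1$, and therefore agree as linear functors in $\LIN$.

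For counitality, the composite $(\delta \otimes \mathrm{id}) \circ \Gamma$ is a span $X_1 \leftarrow M' \to X_1$ whose middle term $M'$ is the homotopy pullback of $s_0 : X_0 \to X_1$ against $d_2 : X_2 \to X_1$. The corresponding pushout in $\Delta$ is the degenerate one expressing $[1]$ as an active-inert pushout of $[1] \leftarrow [0] \to [0]$, and the decomposition space axiom identifies $M'$ with $X_1$. The outer legs then collapse via $d_1 s_0 = \mathrm{id}$ and $d_0 s_0 = \mathrm{id}$, so the span reduces to the identity $X_1 = X_1 = X_1$. The argument for $(\mathrm{id} \otimes \delta) \circ \Gamma$ is symmetric, using $s_1$-identities in place of $s_0$-identities.

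The main obstacle is bookkeeping rather than conceptual depth: one has to identify exactly which inert-active pushouts in $\Delta$ are being invoked, keep careful track of which face and degeneracy maps appear in each leg of the span compositions, and verify that the outer legs of the two sides of coassociativity really do coincide via the simplicial identities. The underlying content — that iterated comultiplication is controlled by $X_3$ via the two triangulations of a square — is essentially the $2$-Segal condition of \cite{dk}, and a down-to-earth version of this argument in the discrete case is carried out in \cite[Sec.~15.3]{abd}.
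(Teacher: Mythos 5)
The paper does not actually prove this proposition: it is stated as a citation to \cite[Thm.~7.4]{gkt1}, with no proof environment at all (for the analogous discrete statement the paper likewise defers to \cite[Sec.~5.3]{gkt1} and \cite[15.3]{abd}). So your sketch is not competing with an argument in the paper; rather, it is a condensed reconstruction of the argument in the cited source, and it is correct in outline. The essential points are all present: $\Gamma$ and $\delta$ are linear functors given by spans, composition of linear functors is span composition via homotopy pullback of the middle legs, the decomposition-space axiom identifies the middle object of each iterated comultiplication with $X_3$ via the two active--inert pushout presentations of $[3]$ (equivalently the two triangulations of the square, i.e.\ the $2$-Segal condition of \cite{dk}), the outer legs match by the simplicial identities, and counitality uses the degenerate active--inert squares together with $d_1 s_0 = \mathrm{id}$. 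This is exactly the mechanism by which \cite{gkt1} proves the theorem, so the approach is the right one and buys an actual explanation where the paper offers only a pointer.

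Two bookkeeping slips are worth fixing. First, your assignment of the two pullbacks to the two composites is swapped: $(\Gamma\otimes\mathrm{id})\circ\Gamma$ comultiplies the \emph{left} output factor $d_2\sigma$, so its middle object is the homotopy pullback of $X_2\xrightarrow{d_2}X_1\xleftarrow{d_1}X_2$, while the $d_0$-against-$d_1$ pullback belongs to $(\mathrm{id}\otimes\Gamma)\circ\Gamma$; since both are equivalent to $X_3$ the conclusion survives, but your parenthetical justification describes the other composite. Second, the pushout you invoke for counitality is not the right one: $[1]=[1]\sqcup_{[0]}[0]$ yields only the trivial pullback $X_1\simeq X_1\times_{X_0}X_0$, whereas the square needed to identify the pullback of $s_0\colon X_0\to X_1$ against $d_2\colon X_2\to X_1$ with $X_1$ is $[1]=[2]\sqcup_{[1]}[0]$, with $f\colon[1]\to[2]$ inert and $g\colon[1]\to[0]$ active. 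Finally, note that in the homotopy-coherent setting ``both composites are represented by the same span'' gives an equivalence but not yet the higher coherence data required of a comonoid in $\LIN$; the full proof in \cite{gkt1} extracts this coherence from the simplicial structure of $X$ itself, which is the one genuinely nontrivial ingredient your sketch elides.
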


Taking the homotopy linear algebraic dual yields a notion of incidence algebra. 

\begin{defn}
The {\bf incidence algebra} of a simplicial space $X$ is the dual $I(X) := (\S_{/X_{1}})^{*} = \Fun(\S_{/X_{1}},\S)$. It is equipped with a linear functor $m : I(X)\otimes I(X)\rightarrow I(X)$ called {\bf multiplication}. Explicitly, for objects $f,g\in I(X)$, their product $m(f,g)$ is given by 
$$
m(f,g) : \S_{/X_{1}}\xrightarrow{\Gamma} \S_{/X_{1}}\otimes\S_{/X_{1}} \xrightarrow{f\otimes g} \S\otimes\S \xrightarrow{\sim} \S. 
$$
\end{defn}

\begin{cor}
If $X$ is a decomposition space, $I(X)$ is an associative, unital algebra object (i.e.~a homotopy monoid) in the category $\LIN$, with multiplication $m$ and unit $\delta$. 
\end{cor}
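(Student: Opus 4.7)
The plan is to derive the corollary from the preceding proposition by formal linear-algebraic duality. Since $I(X) = (\S_{/X_{1}})^{*} = \Fun(\S_{/X_{1}}, \S)$ is by definition the linear dual of the object $C(X) = \S_{/X_{1}}$ on which the proposition just placed a coassociative, counital comonoid structure, the guiding idea is that linear duality in $\LIN$ is (contravariantly) symmetric monoidal and therefore automatically carries comonoids to monoids. This is what is really being packaged as a ``corollary''.

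Concretely, I would verify associativity and unitality by unwinding the definition of $m$. For associativity, both iterated products $m(m(f,g),h)$ and $m(f,m(g,h))$ expand as composites of the form
$$
\S_{/X_{1}} \longrightarrow \S_{/X_{1}}^{\otimes 3} \xrightarrow{f \otimes g \otimes h} \S^{\otimes 3} \xrightarrow{\sim} \S,
$$
where the first arrow is either $(\Gamma \otimes \mathrm{id}) \circ \Gamma$ or $(\mathrm{id} \otimes \Gamma) \circ \Gamma$. The coassociativity of $\Gamma$, recorded in the preceding proposition, supplies a canonical equivalence between these two iterated comultiplications, and whiskering with $f \otimes g \otimes h$ produces the required associativity equivalence for $m$. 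For unitality, $m(\delta,f)$ unfolds to $\S_{/X_{1}} \xrightarrow{\Gamma} \S_{/X_{1}}^{\otimes 2} \xrightarrow{\delta \otimes f} \S \otimes \S \xrightarrow{\sim} \S$; factoring $\delta \otimes f$ through $\S \otimes \S_{/X_{1}}$ and invoking the left counit law $(\delta \otimes \mathrm{id}) \circ \Gamma \simeq \mathrm{id}_{\S_{/X_{1}}}$ identifies this with $f$ itself. The right unit law is symmetric.

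The main obstacle is coherence. In the setting of \cite{gkt1}, the comonoid structure on $C(X)$ is not a strictly commuting diagram but a tower of higher homotopies witnessing (co)associativity and (co)unitality at every level. To promote the pointwise calculations above into a bona fide algebra object of $\LIN$, one must know that the dualization functor $(-)^{*}: \LIN \to \LIN$ transports this entire coherent package. Fortunately this is precisely what is established in \cite{gkt-hla}, where linear duality is shown to be a symmetric monoidal functor; invoking it is what allows the result to be stated as an immediate formal corollary of the proposition. In the discrete case this coherence issue collapses, recovering the elementary verification sketched for simplicial sets in \cite[15.3]{abd}.
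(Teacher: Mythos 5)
Your proposal is correct and follows the same route the paper (implicitly) takes: the corollary is stated without proof precisely because it is the formal dualization of the preceding proposition, with the coherence of the dualized structure supplied by the homotopy linear algebra of \cite{gkt-hla}. Your explicit unwinding of associativity and unitality, and your flagging of the coherence issue, fill in exactly the details the paper leaves to the reader.
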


Every decomposition space $X$ admits a {\it zeta functor} $\zeta\in I(X)$ represented by the span $\zeta : X_{1} \xleftarrow{\operatorname{id}} X_{1} \rightarrow *$. Explicitly, $\zeta$ sends every $1$-simplex to the `scalar' $*$. When $X$ is a decomposition set, this recovers the ordinary zeta function after taking cardinalities on fibres (everything maps to $1$). 

\begin{rem}
\label{rem:pullback}
The utility of homotopy linear algebra becomes clear when we begin comparing the incidence algebras of different decomposition spaces. Let $f : Y\rightarrow X$ be a morphism of simplicial spaces. This induces a map on $1$-simplices, $f_{1} : Y_{1}\rightarrow X_{1}$, which in turn determines a linear functor $f^{*} : I(X)\rightarrow I(Y)$ sending a span $X_{1}\leftarrow V\rightarrow *$ to the composite 
\begin{center}
\begin{tikzpicture}[scale=1.7]
  \node at (-1,1) (Y) {$Y_{1}$};
  \node at (0,0) (X) {$X_{1}$};
  \node at (0,2) (W) {$W$};
  \node at (1,1) (V) {$V$};
  \node at (2,0) (T) {$*$};
  \draw[->] (Y) -- (X);
  \draw[->] (W) -- (Y);
  \draw[->] (W) -- (V);
  \draw[->] (V) -- (X);
  \draw[->] (V) -- (T);
\end{tikzpicture}
\end{center}
In \cite[Sec.~4]{gkt1}, the authors show that when $f$ is a {\it CULF functor}, $f^{*}$ is an algebra homomorphism. An important class of examples of CULF functors are the upper and lower {\it decalage} constructions, which generalize the passage from an incidence algebra of a poset to its reduced subalgebra (see Section~\ref{sec:posets}). In particular, the map 
$$
f : (\N,\mid) \longrightarrow \N^{\times}, \quad [a,b] \longmapsto \frac{b}{a}
$$
where $\N^{\times}$ is the multiplicative monoid of natural numbers, considered as a category with one object, is a CULF map and the induced morphism $f^{*} : I(\N^{\times})\hookrightarrow I(\N,\mid)$ identifies $I(\N^{\times})$ with the reduced incidence subalgebra \cite[Sec.~2.2]{gkt5}. For a number field $K/\Q$, a similar map identifies the reduced incidence subalgebra of $I(I_{K}^{+},\mid)$ with the incidence algebra of the multiplicative monoid of ideals in $\orb_{K}$. 

Meanwhile, for a variety $V/\F_{q}$, let $Z_{0}^{\eff}(V)^{+}$ be the additive monoid of effective $0$-cycles. There is a CULF map 
$$
f : (Z_{0}^{\eff}(V),\leq) \longrightarrow Z_{0}^{\eff}(V)^{+}, \quad [\alpha,\beta] \longmapsto \beta - \alpha. 
$$
Then the image of the induced morphism $f^{*} : I(Z_{0}^{\eff}(V)^{+})\hookrightarrow I(Z_{0}^{\eff}(V),\leq)$ is precisely the reduced subalgebra, in which $\zeta_{V}$ lies. More combinatorial examples can be found in \cite[Sec.~2]{gkt5}. 
\end{rem}

\begin{rem}
\label{rem:pushfwd}
A simplicial map $f : Y\rightarrow X$ induces another map between incidence algebras, this time covariantly. Once again, let $f_{1} : Y_{1}\rightarrow X_{1}$ be the map on $1$-simplices. Then there is a {\it pushforward map} $f_{*} : I(Y)\rightarrow I(X)$ which sends a span $Y_{1}\leftarrow V\rightarrow *$ to the composite $X_{1}\leftarrow Y_{1}\leftarrow V\rightarrow *$. That is, $f_{*}$ ``extends the left leg of every span''. Although $f_{*}$ is rarely an algebra homomorphism, it is still a linear functor and hence has useful applications in the theory of homotopy incidence algebras. For a concrete example, let $K/\Q$ be a number field and let $X = \N^{\times}$ and $Y = I_{K}^{\times}$ be the multiplicative monoids of integral ideals in $\Z$ and $\orb_{K}$, respectively. The norm map $N : I_{K}^{\times}\rightarrow\N^{\times}$ is simplicial, so it induces a pushforward $N_{*} : I(I_{K}^{\times})\rightarrow I(\N^{\times})$. Identify $I(\N^{\times})$ with the reduced subalgebra of $I(\N,\mid)$ as in Remark~\ref{rem:pullback}. Then by Example~\ref{ex:divposet}, after taking cardinalities, $I(\N^{\times})$ is isomorphic to the algebra of Dirichlet series and we can interpret $N_{*}$ as a functor which builds a Dirichlet series for every arithmetic function $f\in A_{K} \cong \widetilde{I}(I_{K}^{+},\mid)$. 

In the case of a variety $V$ over a finite field $k = \F_{q}$, the structure morphism $\pi : V\rightarrow \Spec k$ similarly induces a pushforward $\pi_{*} : \widetilde{I}(Z_{0}^{\eff}(V),\leq) \rightarrow \widetilde{I}(Z_{0}^{\eff}(\Spec k),\leq) \cong \widetilde{I}(\N_{0},\leq)$ which exhibits a power series for every (reduced) arithmetic function on the effective $0$-cycles of $V$. As an example, the zeta functor of $V$ is given by the span 
\begin{center}
\begin{tikzpicture}[scale=1.7]
  \node at (0,0) (X) {$Z_{0}^{\eff}(V)$};
  \node at (1,1) (V) {$Z_{0}^{\eff}(V)$};
  \node at (2,0) (T) {$*$};
  \draw[->] (V) -- (X) node[above,pos=.5] {id};
  \draw[->] (V) -- (T);
\end{tikzpicture}
\end{center}
which decategorifies to the ``numerical'' zeta function $\zeta_{V} : \alpha\mapsto 1$ for each effective $0$-cycle $\alpha$. Applying the pushforward map $\pi_{*}$ produces the span 
\begin{center}
\begin{tikzpicture}[scale=1.7]
  \node at (0,0) (X) {$Z_{0}^{\eff}(\Spec k)$};
  \node at (1,1) (V) {$Z_{0}^{\eff}(V)$};
  \node at (2,0) (T) {$*$};
  \draw[->] (V) -- (X) node[above,pos=.5] {$\pi$};
  \draw[->] (V) -- (T);
\end{tikzpicture}
\end{center}
which in turn decategorifies to the arithmetic function in the Hasse--Weil zeta function $Z(V,t)$. Explicitly, each $1$-simplex in $Z_{0}^{\eff}(\Spec k)$ is of the form $nx$ where $n\geq 0$ and $x$ is the point $\Spec k$. The $n$th coefficient of $\pi_{*}\zeta_{V}$ is computed by identifying the fibre of $nx$ along $\pi$, 
\begin{center}
\begin{tikzpicture}[scale=1.7]
  \node at (-1,1) (Y) {$\{nx\}$};
  \node at (0,0) (X) {$Z_{0}^{\eff}(\Spec k)$};
  \node at (0,2) (W) {$Z_{n}$};
  \node at (1,1) (V) {$Z_{0}^{\eff}(V)$};
  \node at (2,0) (T) {$*$};
  \draw[right hook ->] (Y) -- (X);
  \draw[->] (W) -- (Y);
  \draw[->] (W) -- (V);
  \draw[->] (V) -- (X) node[above,pos=.5] {$\pi$};
  \draw[->] (V) -- (T);
\end{tikzpicture}
\end{center}
and then computing its cardinality: $\#Z_{n} = \#\{\alpha\in Z_{0}^{\eff}(V)\mid \deg(\alpha) = n\}$. 
\end{rem}

In \cite{ak1}, pullback and pushforward maps have been used to prove an objective version of a well-known formula for the Dedekind zeta function of a quadratic number field. Explicitly, let $K/\Q$ be a quadratic number field with objective zeta function $\zeta\in I(I_{K}^{\times})$. Then $K/\Q$ is cut out by a quadratic Dirichlet character $\chi = \legen{D}{\cdot}$, where $D$ is the discriminant of $K$, and we have: 

\begin{thm}[{\cite[Thm.~1.1]{ak1}}]
\label{thm:ak1}
There exist linear functors $L(\chi)^{+},L(\chi)^{-}\in I(I_{\Q}^{\times}) = I(\N^{\times})$ and an equivalence of linear functors 
$$
N_{*}\zeta_{K} + \zeta_{\Q}*L(\chi)^{-} \cong \zeta_{\Q}*L(\chi)^{+}
$$
where $N_{*}$ is the pushforward induced by the field norm map $N = N_{K/\Q}$. 
\end{thm}

Analogously, if $C$ is a hyperelliptic curve over a finite field $k = \F_{q}$, there is a degree $2$ covering map $\pi : C\rightarrow\P_{k}^{1}$. Let $\zeta_{C}\in\widetilde{I}(Z_{0}^{\eff}(X))$ be the objective zeta function of $C$. In \cite{ak2}, we proved the following analogue of Theorem~\ref{thm:ak1}: 

\begin{thm}[{\cite[Thm.~1.1]{ak2}}]
There exist linear functors $L(C)^{+},L(C)^{-}\in\widetilde{I}(Z_{0}^{\eff}(\P^{1}))$ and an equivalence of linear functors 
$$
\pi_{*}\zeta_{C} + \zeta_{\P_{k}^{1}}*L(C)^{-} \cong \zeta_{\P_{k}^{1}}*L(C)^{+}
$$
where $\pi_{*}$ is induced by the double cover $\pi : C\rightarrow\P_{k}^{1}$. 
\end{thm}

To move these objective formulas beyond quadratic extensions and double covers, we plan to utilize objective linear algebra in the category of simplicial $G$-representations, as previewed at the end of Section~\ref{sec:decompset} and below in Section~\ref{sec:future}.


\section{Future Directions}
\label{sec:future}

The full power of decomposition spaces are not needed to describe many of the zeta functions of interest to number theorists, as they arise directly from posets, which are decomposition sets. However, as described in Section~\ref{sec:decompset}, $L$-functions and motivic zeta functions do not fall neatly into the framework of posets. In the latter situation, Das and Howe \cite{dh} construct an incidence algebra for their {\it poscheme of effective $0$-cycles} of a variety and use it to recover the motivic zeta function in the ring $K_{0}(\cat{Var}_{k})[[t]]$ of power series over the Grothendieck ring of $k$-varieties. This construction can also be obtained from a homotopy incidence algebra in the same way as the Hasse--Weil zeta function (Example~\ref{ex:Mobinvvariety}) and we plan to investigate their relationship in future work, which will also give a general description of $L$-functions from this homotopy theory perspective. See also \cite[Appendix A]{ak2} for a brief overview of objective linear algebra for $G$-representations. 

Another type of zeta function that should be amenable to homotopy theoretic methods is the zeta function of an algebraic stack over a finite field. In \cite{beh}, Behrend generalizes the Grothendieck--Lefschetz trace formula to algebraic stacks over a finite field, allowing him to construct the Hasse--Weil zeta function of such a stack. As stacks are presheaves valued in groupoids, homotopy linear algebra is well-suited to the task of encoding the zeta function of a stack using incidence algebras. This investigation will be carried out in future work. 

In their article \cite{cwz}, Campbell, Wolfson and Zakharevich lift the Hasse--Weil zeta function to a map of $K$-theory spectra 
$$
\zeta : K(\cat{Var}_{k}) \longrightarrow K(\Aut(\Z_{\ell}))
$$
where $\Aut(\Z_{\ell})$ denotes the exact category of finitely generated $\Z_{\ell}$-modules with automorphism. They call this the {\it derived $\ell$-adic zeta function} and applying $\pi_{0}$ recovers the Hasse--Weil zeta function via the composition
\begin{align*}
  K_{0}(\cat{Var}_{k}) \xrightarrow{\;\pi_{0}\zeta\;} K_{0}(\Aut(\Z_{\ell})) &\xrightarrow{\;\sim\;} (1 + t\Z_{\ell}[[t]],\cdot)\\
                                                                                 [F] &\longmapsto \det(1 - tF). 
\end{align*}
As the authors suggest in \cite[Question~7.6]{cwz}, one hopes for a lift of the motivic measure 
$$
Z_{mot}(-,t) : K_{0}(\cat{Var}_{k}) \longrightarrow (1 + tK_{0}(\cat{Var}_{k})[[t]],\cdot)
$$
to a map of $K$-theory spectra, ideally in a way that is compatible with the specialization $Z_{mot}(X,t) \mapsto Z(X,t)$ via the motivic measure $\# : K_{0}(\cat{Var}_{k})\rightarrow \Z$. We plan to address this question in future work, using the framework laid out in the present article. More specifically, starting with the simplicial space $\widetilde{S}_{\bullet}(\cat{Var}_{k})$ defined by Campbell in \cite{cam}, one can perform two operations: 
\begin{enumerate}[\quad (a)]
  \item Take its $K$-theory spectrum $K(\cat{Var}_{k})$, as considered in \cite{cam}, \cite{cz} and \cite{cwz}. One might then construct morphisms out of $\widetilde{S}_{\bullet}(\cat{Var}_{k})$ which determine the various maps of ring spectra out of $K(\cat{Var}_{k})$ in \cite{cwz}, especially $\zeta : K(\cat{Var}_{k})\rightarrow K(\Aut(\Z_{\ell}))$. We are currently searching for such a morphism which would give a homotopy theoretic `motivic zeta functor', but at present it is unclear what the target simplicial space should be. 
  
  \item Construct the incidence algebra of $\widetilde{S}_{\bullet}(\cat{Var}_{k})$ and identify its zeta function. One question we have is: in what ways do this abstract zeta function interact with or even determine the Hasse--Weil, derived $\ell$-adic, motivic and other zeta functions? 
\end{enumerate}

Despite not having answers to these questions yet, there is a great deal of information hidden in the structure of $\widetilde{S}_{\bullet}(\cat{Var}_{k})$ and related simplicial objects that can shine a new light on structural aspects of zeta functions.




\begin{thebibliography}{99}


\bibitem{abd} Akhmechet, R., Bergner, J., Daw, A., Dutta, N., Feller, M., Kobin, A., Krstic, B. and Lloyd, C. ``Lectures on $2$-Segal Spaces''. Unpublished course notes. 

\bibitem{ak1} Aycock, J.~and Kobin, A. ``Categorifying quadratic zeta functions''. Preprint (2022), \href{https://arxiv.org/abs/2205.06298}{arXiv:2205.06298}. 

\bibitem{ak2} Aycock, J.~and Kobin, A. ``Categorifying zeta functions of hyperelliptic curves''. Preprint (2023), \href{https://arxiv.org/abs/2304.13111}{arXiv:2304.13111}. 

\bibitem{beh} Behrend, K.A. ``Derived $\ell$-adic categories for algebraic stacks''. Mem.~Amer.~Math.~Soc., 163 (2003), viii - 93. 

\bibitem{cam} Campbell, J. ``The $K$-theory spectrum of varieties''. Transactions of the American Mathematical Society, 317 (2019), 7845 - 7884. 

\bibitem{cwz} Campbell, J., Wolfson, J. and Zakharevich, I. ``Derived $\ell$-adic zeta functions''. Advances in Mathematics, 354 (2019), \href{https://doi.org/10.1016/j.aim.2019.106760}{DOI: 10.1016/j.aim.2019.106760}. 

\bibitem{cz} Campbell, J. and Zakharevich, I. ``Devissage and localization for the Grothendieck spectrum of varieties''. Preprint (2019), \href{https://arxiv.org/abs/1811.08014}{arXiv:1811.08014}. 

\bibitem{dh} Das, R.~and Howe, S. ``Cohomological and motivic inclusion-exclusion''. Preprint (2022), \href{https://arxiv.org/abs/2204.04165}{arXiv:2204.04165}. 

\bibitem{dk} Dyckerhoff, T.~and Kapranov, M. ``Higher Segal spaces I''. Preprint (2012), \href{https://arxiv.org/abs/1212.3563v1}{arXiv:1212.3563v1}.

\bibitem{fgk} Feller, M., Garner, R., Kock, J., Proulx, M.U. and Weber, M. ``Every $2$-Segal space is unital''. Communications in Contemporary Mathematics, 23 (2021), \href{https://doi.org/10.1142/S0219199720500558}{DOI: 10.1142/S0219199720500558}. 

\bibitem{gkt1} G\'{a}lvez-Carrillo, I., Kock, J. and Tonks, A. ``Decomposition spaces, incidence algebras and M\"{o}bius inversion I: basic theory''. Advances in Mathematics, 331 (2018), 952 - 1015. 

\bibitem{gkt2} G\'{a}lvez-Carrillo, I., Kock, J. and Tonks, A. ``Decomposition spaces, incidence algebras and M\"{o}bius inversion II: completeness, length filtration and finiteness''. Advances in Mathematics, 333 (2018), 1242 - 1292. 

\bibitem{gkt3} G\'{a}lvez-Carrillo, I., Kock, J. and Tonks, A. ``Decomposition spaces, incidence algebras and M\"{o}bius inversion III: the decomposition space of M\"{o}bius intervals''. Advances in Mathematics, 334 (2018), 544 - 584. 

\bibitem{gkt-hla} G\'{a}lvez-Carrillo, I., Kock, J. and Tonks, A. ``Homotopy linear algebra''. Proceedings of the Royal Society of Edinburgh Section A: Mathematics 148, 2 (2018), 293 - 325. 

\bibitem{gkt5} G\'{a}lvez-Carrillo, I., Kock, J. and Tonks, A. ``Decomposition spaces in combinatorics''. Preprint (2016), \href{https://arxiv.org/abs/1612.09225}{arXiv:1612.09225}. 

\bibitem{koc} Kock, J. ``Incidence Hopf algebras'' (2010), \href{http://mat.uab.es/~kock/seminars/incidence-algebras.pdf}{http://mat.uab.es/$\sim$kock/seminars/incidence-algebras.pdf}. 

\bibitem{lm} Lawvere, F.W. and Menni, M. ``The Hopf algebra of M\"{o}bius intervals''. Theory and Applications of Categories 24, 10 (2010), 221 - 265. 

\bibitem{ler} Leroux, P. ``Les categories de M\"{o}bius''. Cahiers Topol.~G\'{e}om.~Diff., 16 (1976), 280 - 282. 

\bibitem{mus} Mustat\u{a}, M. ``Zeta functions in algebraic geometry''. \url{http://www.math.lsa.umich.edu/~mmustata/zeta_book.pdf}. 

\bibitem{rez} Rezk, C. ``A model for the homotopy theory of homotopy theory''. Transactions of the American Mathematical Society 353, 3 (2001), 973 - 1007. 

\bibitem{rot} Rota, G.-C. ``On the foundations of combinatorial theory I: theory of M\"{o}bius functions''. {\it Zeitschrift f\"{u}r Wahrscheinlichkeitstheorie und Verwandte Gebiete}, 2 (1964), 340 - 368. 

\bibitem{seg} Segal, G. ``Classifying spaces and spectral sequences''. Publications Math\'{e}matiques de l'I.H.\'{E}.S. 34 (1968), 105 - 112. 

\bibitem{sta} Stanley, R.P. {\it Enumerative Combinatorics, Volume 1}, 2nd ed. Cambridge University Press (1997). 


\end{thebibliography}
\end{document}